\newtheorem{theorem}{Theorem}
\newtheorem{lemma}{Lemma}
\newtheorem{corollary}{Corollary}
\newtheorem{definition}{Definition}
\title{ON THE DISCRETE FUNCTIONAL $L_p$ MINKOWSKI PROBLEM}
\author{Tuo Wang}
\address {Institut f\"{u}r Diskrete Mathematik und Geometrie\\ TU Wien\\
Wiedner Hauptstrasse 8-10, 1040 Wien\\ Austria}
\email{tuowang734@gmail.com}
\begin{document}
\maketitle

\begin{abstract}
The discrete functional $L_p$ Minkowski problem is posed and solved. As a consequence, the general affine P\'{o}lya-Szeg\"{o} principle
and the general affine Sobolev inequalities are established.
\end{abstract}

{\noindent 2000 AMS subject classification: 46B20 (46E35, 52A21, 52B45)}\\
{\noindent Keywords: affine P\'{o}lya-Szeg\"{o} principle, functional Minkowski problem, convex symmetrization.}

\section{Introduction}
The even functional Minkowski problem was first introduced by Lutwak, Yang and Zhang in \cite{20} to answer the following question:

Given a function $f\in W^{1,1}(\mathbb R^n)$, where $W^{1,1}(\mathbb R^n)$ is the usual $L^1$ Sobolev space on $\mathbb R^n$, which norm $\|\cdot\|_o$ on $\mathbb R^n$ minimizes the quantity $\int_{\mathbb R^n}\|\nabla f\|_odx,$ if the unit ball of the dual norm of $\|\cdot\|_o$ has the same volume as the Euclidean unit ball in $\mathbb R^n$?

The solution to this problem turns out to be unique and is called the optimal Sobolev norm.

To answer the question, Lutwak, Yang and Zhang introduced the following even functional Minkowski problem on $W^{1,1}(\mathbb R^n):$

Given $f\in W^{1,1}(\mathbb R^n)$, find an $o$-symmetric convex body $\langle f\rangle_e$ such that
\begin{equation}\label{1}
\int_{S^{n-1}}\Psi(u)dS(\langle f\rangle_e,u)=\int_{\mathbb R^n}\Psi(\nabla f(x))dx
\end{equation}
for all continuous functions $\Psi$	on $\mathbb R^n$ that are even and positively homogeneous of degree $1$. Here $S(\langle f\rangle_e,\cdot)$ is the Alexandrov-Fenchel-Jessen surface area measure of $\langle f\rangle_e$ and $o$-symmetric stands for origin-symmetric.

The optimal Sobolev body $\langle\cdot\rangle_e$ defined through equation (\ref{1}) provides the solution of the optimal Sobolev norm problem. 
The even functional Minkowski problem is shown to play an important role in the theory of affine Sobolev inequalities on $\mathbb R^n$ (see [13][20][27][29]). Using valuations on Sobolev spaces, Ludwig obtained a natural description of the operator $f\mapsto\langle f\rangle_e$ (see \cite{13}). By extending the even functional Minkowski problem to $BV(\mathbb R^n),$ the space of functions of bounded variations, the author was able to extend the Zhang-Sobolev inequality to $BV(\mathbb R^n)$ and got a characterization of the equality cases (see \cite{27}). By combining the even functional Minkowski problem with the convex symmetrization technique introduced in \cite{1}, the author got a Brothers-Ziemer type theorem for the affine P\'{o}lya-Szeg\"{o} principle, which clarified the equality condition of the affine P\'{o}lya-Szeg\"{o} principle (see \cite{29}). Based on these facts, it is natural to ask the following question:

Is there always a hidden convex body that plays an important role in each affine Sobolev type inequality?

While for the symmetric optimal $L_p$ affine Sobolev inequality, the answer was obtained by the Lutwak, Yang and Zhang \cite{20}. The aim of this paper is to define and solve the general functional $L_p$ Minkowski problems on Sobolev spaces. By making use of the solutions of the general functional $L_p$ Minkowski problems and the general $L_p$ affine isoperimetric inequalities \cite{7}, we arrive at the general affine P\'{o}lya-Szeg\"{o} principle and general affine Sobolev type inequalities, which include the results in [8][9][22] as special cases. This shows that the crucial convex body behind the discrete version general affine Sobolev is the solution of the discrete functional Minkowski problem.

To to specific, we study the discrete versions of functional $L_p$ Minkowski Problems. Instead of $W^{1,1}(\mathbb R^n)$, we consider the functional $L_p$ Minkowski problem on $L^{1,0}(\mathbb R^n)$, a dense subspace of $W^{1,p}(\mathbb R^n)$.
 \begin{definition}The space of piecewise affine functions on $\mathbb R^n$, where a function $f: \mathbb R^n\rightarrow \mathbb R$ is called piecewise affine, if $f$ is continuous and there are finitely many $n$-dimensional simplices $T_1,...,T_m\subset \mathbb R^n$ with pairwise disjoint interiors such that the restriction of $f$ to each $T_i$ is affine and $0$ outside $T1\cup\cdots\cup T_m.$\end{definition}

 In particular, we get the following result:
 \begin{theorem} Given a nontrivial function $f\in L^{1,0}(\mathbb R^n)$, there exists a
unique polytope $\langle f\rangle_p\in\mathcal P_0^n$ such that
\begin{equation}\label{4}
n\int_{\mathbb R^n}\Psi^p(-\nabla f(x))dx=\int_{S^{n-1}}\Psi(u)^pdS_p(\langle f\rangle_p,u),
\end{equation}
for every continuous function $\Psi:\mathbb R^n\rightarrow[0,\infty)$ that is homogeneous of
degree $1$.
\end{theorem}

The new functional $L_p$ Minkowski problem enjoys some natural affine invariance property and valuation property. By studying
this new functional Minkowski problem on $L^{1,0}(\mathbb R^n)$, we are led to the general affine P\'{o}lya-Szeg\"{o} principle and general affine Sobolev inequalities.

We remark that, although the discovery of the discrete $L^p$ Minkowski problem on $W^{1,p}(\mathbb R^n)$ is the main issue of this piece of work, a very important tool that we make use in this piece of work is the Haberl-Schuster version of the general $L_p$ affine isoperimetric inequality in \cite{7}.



\section{Background material}
Background on the $L_p$ Brunn-Minkowski theory. For quick reference we recall in this section some background material from the $L_p$ Brunn-Minkowski theory of convex bodies. This theory has its origin in the work of Firey from the 1960's and has expanded rapidly over the last two decades since the work of Lutwak \cite{17} (see, e.g.,[2, 3, 4, 7, 9, 10, 11, 12, 14, 16, 17, 18, 19, 20, 21, 22, 23, 24, 27, 28]).

A convex body is a compact convex subset of $\mathbb R^n$ with non-empty interior. We write $\mathcal K^n$ for the set of convex bodies in $\mathbb R^n$ endowed with the Hausdorff metric and we denote by $\mathcal K^n_e$ the set of origin symmetric convex bodies, by $\mathcal K^n_0$ the set of convex bodies containing origin in the interior and by $\mathcal P^n_0$ the set of polytopes containing origin in the interior. Each non-empty compact convex set $K$ is uniquely determined by its support function $h(K,\cdot)$, defined by
$$h(K,x)=\max\{x\cdot y: y\in K\},~~x\in\mathbb R^n,$$
where $x\cdot y$ denotes the Euclidean inner product of $x$ and $y$ in $\mathbb R^n$. Note that $h(K,\cdot)$ is positively homogeneous of degree $1$ and sub-additive. Conversely, every function with these two properties is the support function of a unique compact convex set.

For later use, we include the following lemma from \cite{26}:

\begin{lemma}
Let $K_i$ and $K$ be convex bodies in $\mathbb R^n$. If $h(K_i,\cdot)\rightarrow h(K,\cdot)$ pointwise, then $h(K_i,\cdot)\rightarrow h(K,\cdot)$ uniformly on $S^{n-1}$.
\end{lemma}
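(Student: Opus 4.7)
My plan is to reduce uniform convergence on $S^{n-1}$ to a standard Arzel\`a--Ascoli argument, using the well-known fact that support functions of convex bodies are Lipschitz. The first observation is that for any convex body $L \in \mathcal K^n$, sublinearity and $h(L,x)\le R_L\,|x|$ with $R_L=\max\{|z|:z\in L\}$ give
\[
|h(L,x)-h(L,y)| \le R_L\,|x-y|\qquad\text{for all }x,y\in\mathbb R^n,
\]
so the restriction of $h(L,\cdot)$ to $S^{n-1}$ is $R_L$-Lipschitz. Thus, to get equicontinuity of the family $\{h(K_i,\cdot)\}$ on $S^{n-1}$, it suffices to produce a uniform upper bound on the circumradii $R_{K_i}$.

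Such a uniform bound is forced by pointwise convergence. I would test it against the finite set of directions $\pm e_1,\dots,\pm e_n$: since each scalar sequence $h(K_i,\pm e_j)$ converges, it is bounded, and because
\[
K_i \subseteq \prod_{j=1}^n\bigl[-h(K_i,-e_j),\,h(K_i,e_j)\bigr],
\]
there exists $R>0$ with $K_i \subseteq R\,B^n$ for every $i$. Consequently every $h(K_i,\cdot)$ is $R$-Lipschitz on $S^{n-1}$, so the family is uniformly equicontinuous there.

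At that stage the conclusion is immediate: on the compact metric space $S^{n-1}$ a pointwise convergent, uniformly equicontinuous sequence of continuous functions converges uniformly. One may either cite Arzel\`a--Ascoli directly, or run the standard $\varepsilon/3$ argument --- choose a finite $\delta$-net $u_1,\dots,u_N$ in $S^{n-1}$ with $R\delta<\varepsilon/3$, pick $i_0$ so that $|h(K_i,u_k)-h(K,u_k)|<\varepsilon/3$ for all $k$ and all $i\ge i_0$, and then use the common Lipschitz bound to transfer smallness from the net to all of $S^{n-1}$.

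The only genuinely non-cosmetic step is the uniform boundedness of the $K_i$; everything else is formal. I expect this to be the main (and still minor) obstacle, and testing against a basis $\pm e_1,\dots,\pm e_n$ handles it cleanly, since any bound on the support function in $2n$ coordinate directions already confines the body to a compact box.
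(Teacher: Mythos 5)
Your proof is correct and complete: the paper itself states this lemma without proof, merely citing Schneider's book, and your argument (uniform boundedness of the $K_i$ from pointwise convergence in the $2n$ coordinate directions, hence a common Lipschitz constant for the support functions, hence uniform equicontinuity and an $\varepsilon/3$ finite-net argument) is precisely the standard proof found there. No gaps; the only cosmetic remark is that the limit $h(K,\cdot)$ inherits the same Lipschitz bound as a pointwise limit of $R$-Lipschitz functions, which is what the last step of the $\varepsilon/3$ argument implicitly uses.
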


If $K\in \mathcal K^n_0$ , then the polar body $K^\circ$ of $K$ is defined by $$K^\circ=\{x\in \mathbb R^n: x\cdot y\leq 1~~~ for ~all~ y\in K\}.$$

From the polar formula for volume it follows that the $n$-dimensional Lebesgue measure $|K^\circ|$ of the polar body $K^\circ$ can be computed by
$$|K^\circ|=\frac{1}{n}\int_{S^{n-1}}h(K,u)^{-n}du,$$ where integration is with respect to spherical Lebesgue measure.

For real $p\geq 1$ and $\alpha,\beta>0$, the $L_p$ Minkowski combination of $K,L\in\mathcal K_0^n$ is the convex body $\alpha\cdot K+_p\beta\cdot L$ defined by $$h^p(\alpha\cdot K+_p\beta\cdot L,u)=h^p(K,u)+h^p(L,u),$$ and the $L_p$ mixed volume of $K,L$ is defined by

 $$V_p(K,L)=\frac{p}{n}\lim_{\epsilon\rightarrow0^+}\frac{|K+_p\epsilon\cdot L|-|K|}{\epsilon}.$$

Clearly, the diagonal form of $V_p$ reduces to ordinary volume, i.e., for $K\in\mathcal K_0^n,$ $$V_p(K,K)=|K|.$$

It was also shown in \cite{17} that for all convex bodies $K,L\in\mathcal K_0^n,$ $$V_p(K,L)=\frac{1}{n}\int_{S^{n-1}}h(L,u)^pdS_p(K,u),$$ where $dS_p(K,u)=h(K,u)^{1-p}dS(K,u)$. Recall that for a Borel set $\omega\subset S^{n-1}$, $S(K,\omega)$ is the $(n-1)$-dimensional Hausdorff measure $H^{n-1}$ of the set of all boundary points of $K$ for which there exists a normal vector of $K$ belonging to $\omega.$

A fundamental inequality that we need is the $L_p$ Minkowski inequality \cite{17}.

\begin{theorem} [\cite{17}] If $1<p<\infty$ and $K,L\in\mathcal K_0^n,$ then
\begin{equation}\label{2}
V_p(K,L)\geq|K|^{1-p/n}|L|^{p/n}.
\end{equation}
Equality holds if and only if $L=tK$ for some $t>0.$
\end{theorem}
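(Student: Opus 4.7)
The plan is to derive the $L_p$ Minkowski inequality from two ingredients already present in the background: the integral representations of $V_p(K,L)$ and $|K|$ in terms of the surface area measure $S(K,\cdot)$, and the classical ($p=1$) Minkowski inequality $V_1(K,L)\geq |K|^{(n-1)/n}|L|^{1/n}$, which itself follows from Brunn--Minkowski by differentiating $|K+\epsilon L|$ at $\epsilon=0^+$.

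First I would rewrite the mixed volume as an average against a probability measure. Using $dS_p(K,\cdot)=h(K,\cdot)^{1-p}dS(K,\cdot)$ together with the diagonal identity $|K|=V_p(K,K)=\frac{1}{n}\int_{S^{n-1}}h(K,u)\,dS(K,u)$, I would introduce
$$d\mu_K(u)=\frac{h(K,u)}{n|K|}\,dS(K,u),$$
which is a probability measure on $S^{n-1}$. A short rewriting then gives
$$\frac{V_p(K,L)}{|K|}=\int_{S^{n-1}}\Bigl(\frac{h(L,u)}{h(K,u)}\Bigr)^{p}d\mu_K(u),\qquad \frac{V_1(K,L)}{|K|}=\int_{S^{n-1}}\frac{h(L,u)}{h(K,u)}\,d\mu_K(u).$$

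Since $p>1$, Jensen's inequality applied to the convex function $t\mapsto t^p$ and the measure $\mu_K$ yields $V_p(K,L)/|K|\geq \bigl(V_1(K,L)/|K|\bigr)^p$. Combining this with the classical Minkowski inequality in the form $V_1(K,L)/|K|\geq (|L|/|K|)^{1/n}$ produces exactly $V_p(K,L)\geq |K|^{1-p/n}|L|^{p/n}$, which is (\ref{2}).

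For the equality case, Jensen's equality forces $h(L,\cdot)/h(K,\cdot)$ to equal a constant $t>0$ $\mu_K$-a.e., hence $h(L,u)=t\,h(K,u)$ on $\supp S(K,\cdot)$; the equality case of the classical Minkowski inequality then forces $K$ and $L$ to be homothetic, and since both bodies contain the origin in their interior the translation part must vanish, giving $L=tK$. The step I expect to need the most care is precisely this equality analysis: Jensen controls $h(L,\cdot)$ only on $\supp S(K,\cdot)$, so the global identification $L=tK$ has to be extracted by combining that partial information with the equality condition of the classical Minkowski inequality (equivalently, with the Minkowski uniqueness theorem for surface area measures) rather than from Jensen alone.
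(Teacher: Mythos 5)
The paper does not prove this statement at all: it is quoted verbatim from Lutwak's paper \cite{17} as background (Theorem~2), so there is no in-paper proof to compare against. Your argument is essentially Lutwak's original one (he phrases the reduction via H\"older's inequality rather than Jensen, but with the probability measure $d\mu_K=\frac{h(K,u)}{n|K|}\,dS(K,u)$ the two are the same computation), and the main chain
$$\frac{V_p(K,L)}{|K|}=\int_{S^{n-1}}\Bigl(\tfrac{h(L,u)}{h(K,u)}\Bigr)^{p}d\mu_K\;\geq\;\Bigl(\tfrac{V_1(K,L)}{|K|}\Bigr)^{p}\;\geq\;\Bigl(\tfrac{|L|}{|K|}\Bigr)^{p/n}$$
is correct; the hypothesis $K,L\in\mathcal K_0^n$ is exactly what makes $h(K,\cdot)>0$ so that the ratio and the measure $\mu_K$ are well defined.

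One sentence in your equality analysis is wrong as stated, though you flag the right issue immediately afterwards: it is \emph{not} true that two homothetic bodies both containing the origin in their interiors must differ by a dilation with zero translation (take $B$ and $B+x_0$ for small $x_0$). The correct way to finish is the combination you describe in your last sentence: equality in the classical Minkowski inequality gives $L=tK+x_0$, so $h(L,u)=t\,h(K,u)+x_0\cdot u$; equality in Jensen (strict convexity of $t\mapsto t^p$ for $p>1$) gives $h(L,u)=t\,h(K,u)$ for $S(K,\cdot)$-a.e.\ $u$; hence $x_0\cdot u=0$ on $\supp S(K,\cdot)$, and since the surface area measure of a convex body is not concentrated on any great subsphere, its support spans $\mathbb R^n$ and $x_0=0$. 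With that substitution for the flawed clause, your proof is complete.
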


Projection bodies have become a central notion within the Brunn-Minkowski theory. They arise naturally in a number of different areas such as functional analysis, stochastic geometry and geometric tomography. The fundamental affine isoperimetric inequality which connects the volume of a convex body with that of its polar projection body is the Petty projection inequality \cite{25}. This inequality turned out to be stronger than the classical isoperimetric inequality and its functional form is Zhang's discovery, namely the affine Zhang-Sobolev inequality \cite{30}.

The $L_p$ projection body $\Pi_pK$ of $K\in\mathcal K_0^n,$ was introduced in \cite{18} as the convex body such that
$$h(\Pi_pK,u)^p=\int_{S^{n-1}}|u\cdot v|^pdS_p(K,v), ~u\in S^{n-1}.$$

The $L_p$ analog of Petty's projection inequality plays a key role in the $L_p$ Brunn-Minkowski theory. It was first proved by Lutwak, Yang, and Zhang \cite{21} (see also Campi and Gronchi \cite{3} for an independent approach): If $K\in\mathcal K_0^n$, then
$$|K|^{n/p-1}|\Pi_pK|\leq|B_2|^{n/p-1}|\Pi_pB_2|,$$
where $B_2$ is the Euclidean unit ball and equality is attained if and only if $K$ is an ellipsoid centered at the origin.

For a finite Borel measure $\mu$ on $S^{n-1}$, we define a continuous function $C_p^{+}(\mu)$ on $S^{n-1}$, the asymmetric $L_p$ cosine transform of $\mu$, by
$$(C_P^+\mu)(u)=\int_{S^{n-1}}(u\cdot v)^p_+d\mu(v), u\in S^{n-1},$$
where $(u\cdot v)_+=\max\{u\cdot v,0\}$. The asymmetric $L_p$ projection body $\Pi^p_+(K)$ of $K\in\mathcal K_0^n,$ first considered in \cite{12}, is the convex body defined by $$h(\Pi^p_+(K),\cdot)^p=C_p^+(S_p(K,\cdot)).$$

For $p>1$, by using valuations, Ludwig \cite{12} established the $L_p$ analogue of her classification of the projection operator: She showed that the convex bodies
$$(1-\lambda)\cdot\Pi_p^+(K)+_p\lambda\cdot\Pi_p^-(K), K\in\mathcal K_0^n$$
where $0\leq\lambda\leq1$ and $\Pi_p^-(K)=\Pi_p^+(-K)$, constitute all natural $L_p$ extensions of projection bodies.

Haberl and Schuster established the following general $L_p$ Petty projection inequalities in \cite{7}.
\begin{theorem} [\cite{7}] Let $K\in\mathcal K_0^n$ and $p>1$. If $\Phi_{\lambda,p}K$ is the convex body defined by
\begin{equation}\label{3}
\Phi_{\lambda,p}(K)=(1-\lambda)\cdot\Pi_p^+(K)+_p\lambda\cdot\Pi^-_p(K),
\end{equation}
where $0\leq\lambda\leq1$, then
$$|K|^{n/p-1}|\Phi_{\lambda,p}K|\leq|B_2|^{n/p-1}|\Phi_{\lambda,p}B_2|,$$
with equality if and only if $K$ is an ellipsoid centered at the origin.
\end{theorem}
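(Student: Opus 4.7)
My plan is to prove this by $L_p$ Steiner symmetrization, extending the Lutwak--Yang--Zhang argument that handles the symmetric case $\lambda = \tfrac{1}{2}$ (where $\Phi_{\lambda,p} K$ reduces, up to scalar, to the ordinary $L_p$ projection body $\Pi_p K$). The argument breaks into four steps: (i) describe how the asymmetric surface data feeding into $\Pi_p^+ K$ and $\Pi_p^- K$ transform under a Steiner symmetrization $S_e K$ along a direction $e \in S^{n-1}$; (ii) prove the monotonicity $|\Phi_{\lambda,p}^\circ(S_e K)| \geq |\Phi_{\lambda,p}^\circ K|$ (working with the polar body, for which the isoperimetric ratio is scale invariant); (iii) iterate (ii) along a countable dense family of directions to drive $K$ to a Euclidean ball $B$ with $|B| = |K|$ in Hausdorff metric, and invoke continuity of $\Phi_{\lambda,p}$ (via Lemma~1 and weak continuity of $S_p$) to transfer the limit inequality; (iv) rescale from $B$ to $B_2$ using the homogeneity $\Phi_{\lambda,p}(tK) = t^{(n-p)/p}\, \Phi_{\lambda,p} K$, which follows from $dS_p(tK, \cdot) = t^{n-p} dS_p(K, \cdot)$.

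The main obstacle is step (ii). In the symmetric case the LYZ argument proceeds chord by chord and uses Brunn--Minkowski in the fibers, but here the positive and negative cosine transforms
\[
h(\Pi_p^+ K, u)^p = \int_{S^{n-1}} (u \cdot v)_+^p\, dS_p(K,v), \qquad h(\Pi_p^- K, u)^p = \int_{S^{n-1}} (u \cdot v)_-^p\, dS_p(K,v)
\]
are interchanged by reflection across $e^\perp$, so the $(1-\lambda):\lambda$ convex combination defining $\Phi_{\lambda,p}$ does not split into pieces that are individually symmetric. What I would do is parameterize $\partial K$ by its chords parallel to $e$: each chord contributes two boundary patches whose outer normals lie on opposite sides of $e^\perp$, and under $S_e$ these two normals become reflections of one another through $e^\perp$. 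Writing out the support function of $\Phi_{\lambda,p}(S_e K)$ in this paired form and comparing to $\Phi_{\lambda,p} K$ reduces (ii) to a pointwise-in-$u$ inequality of the shape
\[
((1-\lambda) A^p + \lambda B^p)^{-n/p} + ((1-\lambda) B^p + \lambda A^p)^{-n/p} \leq 2 \, ((1-\lambda) C^p + \lambda D^p)^{-n/p},
\]
where $C,D$ are appropriate averages of $A,B$ coming from the symmetrization. I would establish this by a Borell--Brascamp--Lieb or integral Minkowski argument exploiting the convexity of $t \mapsto t^{-n/p}$; integration against $u$ via $|M^\circ| = \tfrac{1}{n}\int h(M, u)^{-n}\,du$ then yields the desired volume monotonicity.

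Steps (iii) and (iv) are essentially standard: the Mani/Klain theorem produces a sequence of directions along which iterated Steiner symmetrizations of $K$ converge to the centered Euclidean ball of volume $|K|$, and the homogeneity above handles the rescaling. For the equality case, I would track back through the proof: the Borell--Brascamp--Lieb step in (ii) is strict unless $A = B$ pointwise on the chord family, so equality forces $K$ to be reflection-symmetric across every $e^\perp$, hence a Euclidean ball; invariance of the full inequality under $\mathrm{GL}(n)$ (which follows from the standard transformation rule for $S_p$ under linear maps) then upgrades this to the characterization of $K$ as an origin-centered ellipsoid.
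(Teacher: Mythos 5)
First, a point of orientation: the paper itself offers no proof of this statement. Theorem 3 is quoted from Haberl and Schuster \cite{7}, so your proposal has to be measured against their argument rather than against anything in the present text. (As you implicitly recognized by passing to the polar body in step (ii), the inequality as printed is not even scale invariant unless $p=n$; the correct statement, and the one proved in \cite{7}, bounds $|K|^{n/p-1}|\Phi_{\lambda,p}^{\circ}K|$ from above.)

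The genuine gap is your step (ii), and it is not a technicality but the entire difficulty of the theorem. You propose to compare $\Phi_{\lambda,p}K$ with $\Phi_{\lambda,p}(S_eK)$ chord by chord and to reduce to a pointwise inequality in which ``$C,D$ are appropriate averages of $A,B$ coming from the symmetrization,'' but no such averaging identity exists. The quantities feeding into $h(\Pi_p^{\pm}K,u)^p$ are integrals against $dS_p(K,v)=h(K,v)^{1-p}dS(K,v)$, i.e.\ against boundary area weighted by $(x\cdot\nu(x))^{1-p}$; over a given point $y$ of the projection of $K$ onto $e^{\perp}$, both the normals and the support values $x\cdot\nu(x)$ at the top and bottom boundary points change under $S_e$ in a way that is not expressible in terms of the data of that chord alone. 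This is exactly why, even in the symmetric case $\lambda=\tfrac{1}{2}$, no direct Steiner-symmetrization proof of the $L_p$ Petty projection inequality is known for $p>1$ (for $p=1$ it works because brightness does not increase under symmetrization, but the weight $h^{1-p}$ destroys this for $p>1$). Lutwak--Yang--Zhang, Campi--Gronchi and Haberl--Schuster all circumvent the obstruction by first proving the corresponding (asymmetric) $L_p$ Busemann--Petty centroid inequality for the bodies with support function $\bigl(\int_K((1-\lambda)(u\cdot x)_+^p+\lambda(u\cdot x)_-^p)\,dx\bigr)^{1/p}$ --- a functional given by an integral over the solid body, which does vary in a controlled (linear-in-the-parameter) way along shadow systems and hence yields the required convexity of the polar volume --- and then transferring the result to the projection bodies via an $L_p$ mixed-volume duality identity combined with the $L_p$ Minkowski inequality (Theorem 2), the so-called class reduction. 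Until you either exhibit and prove your pointwise inequality with explicit $C,D$ (I expect it to fail) or adopt the centroid-body detour, steps (iii) and (iv) have nothing to stand on. The equality discussion inherits the same gap, and in addition the equality case of a Steiner-symmetrization argument only yields symmetry of $K$ about some hyperplane orthogonal to $e$, not about $e^{\perp}$ itself, so further work (in \cite{7} it comes from the equality case of the centroid inequality together with $\mathrm{GL}(n)$ covariance) is needed to conclude that $K$ is an origin-centered ellipsoid.
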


The classical Minkowski problem has a counterpart in the $L_p$ Brunn-Minkowski theory. The $L_p$ Minkowski problem asks the following: Given a Borel measure $\mu$ on $S^{n-1}$, does there exist a convex body $K$ such that $\mu=S_p(K,\cdot)$?

If the data $\mu$ is an even measure, Lutwak \cite{17} gave an affirmative answer to this problem when $p\neq n$. Later on, Lutwak, Yang and Zhang \cite{24} introduced the volume-normalized even $L_p$ Minkowski problem, for which the case $p=n$ can be handled as well. What we need in this work is the following discrete version of the $L_p$ Minkowski problem.

\begin{theorem} [\cite{10}] Let vectors $u_1,...,u_m\in S^{n-1}$ that are not contained in a closed hemisphere and real numbers $\alpha_1,...,\alpha_m>0$ be given. Then, for any $p>1$ with $p\neq n$, there exists a unique polytope $P\in\mathcal P_0^n$ such
that $$\sum_{j=1}^{m}\alpha_j\delta_{u_j}=S_p(P,\cdot).$$
\end{theorem}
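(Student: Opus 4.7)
The plan is to combine a constrained minimization argument for existence with the $L_p$ Minkowski inequality (\ref{2}) for uniqueness.

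For existence, I would consider the family of polytopes
\[
P(h) := \bigcap_{j=1}^m \{x \in \mathbb R^n : x \cdot u_j \leq h_j\}, \qquad h = (h_1,\ldots,h_m) \in (0,\infty)^m,
\]
and minimize the functional $\Phi(h) := \sum_{j=1}^m \alpha_j h_j^p$ subject to $|P(h)| = 1$. The hypothesis that the $u_j$ are not contained in a closed hemisphere is equivalent to their positively spanning $\mathbb R^n$, which ensures $P(h)$ is bounded; together with $h_j>0$ this places $P(h)$ in $\mathcal P_0^n$. Since $\Phi$ is coercive in each coordinate, a minimizing sequence $h^{(k)}$ admits a subsequence converging to some $h^\ast \in [0,\infty)^m$, and continuity of $h \mapsto |P(h)|$ gives $|P(h^\ast)| = 1$.

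The main technical obstacle is to verify that $h^\ast$ lies in the open orthant $(0,\infty)^m$ and that every $u_j$ is a genuine outer facet normal of $P(h^\ast)$. For the former, if some $h_{j_0}^\ast = 0$, I would produce a translation $P(h^\ast)+v$ of the minimizer, with $v$ chosen by exploiting the positive-span condition on the $u_j$, that strictly decreases $\Phi$; for the latter, if $u_{j_0}$ were redundant in describing $P(h^\ast)$, slightly decreasing $h_{j_0}$ would leave $P(h^\ast)$ unchanged while lowering $\Phi$. Both scenarios contradict minimality, so at the minimizer every facet $F_j$ has positive $(n-1)$-dimensional area $a_j$ and $\partial |P(h)|/\partial h_j = a_j$. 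Lagrange multipliers then yield a common $\lambda > 0$ with $p\alpha_j (h_j^\ast)^{p-1} = \lambda\, a_j$, equivalently $\alpha_j = (\lambda/p)\, S_p(P(h^\ast), \{u_j\})$ by virtue of $dS_p = h^{1-p} dS$. Using the scaling $S_p(cQ,\cdot) = c^{n-p} S_p(Q,\cdot)$, the rescaled body $P := c\, P(h^\ast)$ with $c := (\lambda/p)^{1/(n-p)}$ --- well-defined precisely because $p\neq n$ --- lies in $\mathcal P_0^n$ and satisfies $S_p(P, \cdot) = \sum_j \alpha_j \delta_{u_j}$.

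Uniqueness is then immediate from (\ref{2}): if $P_1, P_2 \in \mathcal P_0^n$ both solve the equation, then
\[
V_p(P_1, P_2) = \frac{1}{n}\sum_j \alpha_j h(P_2, u_j)^p = V_p(P_2, P_2) = |P_2|,
\]
and symmetrically $V_p(P_2, P_1) = |P_1|$. Substituting into $V_p(K,L)\geq |K|^{1-p/n}|L|^{p/n}$ in both directions forces $|P_1|^{1-p/n} = |P_2|^{1-p/n}$, whence $|P_1| = |P_2|$ since $p \neq n$, and the equality clause $L = tK$ of (\ref{2}) combined with equal volumes gives $P_1 = P_2$. The delicate part of the whole argument is the boundary analysis in the existence step --- forcing the minimizer into the open orthant and ensuring every prescribed direction appears as a facet normal --- which is exactly where the hypothesis that the $u_j$ positively span $\mathbb R^n$ is indispensable.
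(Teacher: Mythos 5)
First, a remark on the comparison itself: the paper does not prove this statement at all --- it is Theorem~4, quoted verbatim from Hug--Lutwak--Yang--Zhang \cite{10} as background --- so there is no in-paper argument to measure you against. Your overall architecture (constrained minimization of $\sum_j\alpha_jh_j^p$ over $\{|P(h)|=1\}$, Lagrange multipliers, rescaling via $S_p(cQ,\cdot)=c^{n-p}S_p(Q,\cdot)$ using $p\neq n$, and uniqueness from the $L_p$ Minkowski inequality) is the standard one and is essentially the strategy of \cite{10}. The uniqueness half is complete and correct as written.

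There is, however, a genuine gap at exactly the point you flag as the main obstacle: the translation argument for showing $h^\ast\in(0,\infty)^m$ does not work for $p>1$. A translation by $v$ replaces $h_j^\ast$ by $h_j^\ast+v\cdot u_j$; admissibility forces $v\cdot u_j\ge 0$ for every $j$ in $J=\{j:h_j^\ast=0\}$, and because $p>1$ those indices contribute only $O(t^p)=o(t)$ to $\Phi$, so the first-order variation is $pt\,v\cdot w$ with $w=\sum_{j\notin J}\alpha_j(h_j^\ast)^{p-1}u_j$. By the Farkas alternative, an admissible $v$ with $v\cdot w<0$ exists \emph{only if} $w\notin\mathrm{cone}\{u_j:j\in J\}$, and the positive-span hypothesis does not rule out the degenerate alternative (one can arrange $w$ inside the normal cone of $P(h^\ast)$ at the origin already for pentagons in the plane). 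Worse, the map $v\mapsto\sum_j\alpha_j(h_j^\ast+v\cdot u_j)^p$ is convex on its admissible domain, so when the first-order test fails there is \emph{no} volume-preserving translation that decreases $\Phi$; the argument cannot be rescued at second order. The perturbation that actually works, and which is where $p>1$ enters essentially, is not a translation: if $h_{j_0}^\ast=0$, then the origin lies on a facet of $P(h^\ast)$ with normal $u_{j_0}$ and area $a_{j_0}>0$, so relaxing only that constraint to $h_{j_0}=\epsilon$ gains volume $a_{j_0}\epsilon+o(\epsilon)$; rescaling back to unit volume multiplies $\Phi$ by $\bigl(1+a_{j_0}\epsilon+o(\epsilon)\bigr)^{-p/n}$, producing a decrease of order $\epsilon$, while the direct cost of the relaxation is $\alpha_{j_0}\epsilon^p=o(\epsilon)$. (Equivalently, the sign conditions on the KKT multipliers of the constraints $h_j\ge 0$ are violated at any boundary minimizer.) With that substitution the rest of your existence argument --- tightness of all constraints, positivity of all facet areas from the Euler--Lagrange relation $p\alpha_j(h_j^\ast)^{p-1}=\lambda a_j$, and the final rescaling --- goes through.
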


For treating the case $p=n$, we need the normalized version of the above theorem.

\begin{theorem} [\cite{10}] Let vectors $u_1,...,u_m\in S^{n-1}$ that are not contained in a closed hemisphere and real numbers $\alpha_1,...,\alpha_m>0$ be given. Then there exists a unique polytope $P\in\mathcal P_0^n$ such that $$\sum_{j=1}^{m}\alpha_j\delta_{u_j}=\frac{S_n(P,\cdot)}{|P|}.$$
\end{theorem}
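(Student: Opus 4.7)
The plan is to prove the theorem via a constrained variational problem and then use the $L_p$ Minkowski inequality for uniqueness. Let $\mu = \sum_{j=1}^m \alpha_j \delta_{u_j}$. For $h = (h_1,\dots,h_m) \in (0,\infty)^m$ set $P(h) = \{x \in \R^n : x\cdot u_j \le h_j,\ j=1,\dots,m\}$, which is a bounded polytope in $\K_0^n$ by the no-closed-hemisphere hypothesis. I would minimize
\[
\Phi(h) = \sum_{j=1}^m \alpha_j h_j^n \quad \text{subject to } |P(h)|=1.
\]

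For existence I would take a minimizing sequence $(h^{(k)})$ and extract a Hausdorff limit $P^*$ of $P(h^{(k)})$. Coercivity in the $h_j\to\infty$ directions is immediate since $\Phi(h)\ge \alpha_j h_j^n$ and $\alpha_j>0$. The delicate issue is that individual components of $h^{(k)}$ may tend to zero without $\Phi$ blowing up, so one must exclude the possibility that $P^*$ degenerates to a lower-dimensional set or that some $u_j$ becomes inactive in the limit. The standard way to handle this is a perturbation argument: if some $h_j^* = 0$, slightly rescaling $P^*$ and activating the $u_j$-constraint with a small positive value produces a competitor with strictly smaller $\Phi$ at volume $1$, contradicting minimality. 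This compactness/nondegeneracy step is the main obstacle.

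Given a minimizer $h^*$ with all $h_j^* > 0$, the Lagrange condition reads $n\alpha_j (h_j^*)^{n-1} = \lambda\, a_j$, where $a_j = S(P^*, \{u_j\})$ is the facet area and $\lambda$ the multiplier. Multiplying by $h_j^*$, summing over $j$, and using $|P^*| = \tfrac{1}{n}\sum_j h_j^* a_j = 1$, one obtains $\lambda = \Phi(h^*)$. Dividing each Lagrange relation by $(h_j^*)^{n-1}$ and recalling that $dS_n(P^*,\cdot) = h(P^*,\cdot)^{1-n}\,dS(P^*,\cdot)$ yields
\[
S_n(P^*, \cdot) = \frac{n}{\Phi(h^*)}\, \mu.
\]
Since $S_n$ is invariant under the dilation $P \mapsto tP$ while $|tP| = t^n|P|$, choosing $t = (n/\Phi(h^*))^{1/n}$ produces a polytope $P = tP^* \in \mathcal P_0^n$ with $S_n(P,\cdot)/|P| = \mu$, establishing existence.

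For uniqueness, suppose $P_1, P_2 \in \mathcal P_0^n$ both satisfy $S_n(P_i,\cdot)/|P_i| = \mu$. The integral representation $V_n(K,L) = \tfrac{1}{n}\int_{S^{n-1}} h(L,u)^n\, dS_n(K,u)$ together with $V_n(P_2,P_2) = |P_2|$ forces $\int h(P_2,u)^n\,d\mu(u) = n$, and hence $V_n(P_1, P_2) = |P_1|$. The $L_p$ Minkowski inequality (\ref{2}) with $p = n$ gives $V_n(P_1, P_2) \ge |P_2|$ with equality only when $P_2 = t P_1$ for some $t > 0$. Thus $|P_1| \ge |P_2|$; exchanging the roles of $P_1$ and $P_2$ yields the reverse inequality, so equality holds and $P_2 = tP_1$. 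Finally $|P_1| = |P_2| = t^n |P_1|$ forces $t = 1$, so $P_1 = P_2$.
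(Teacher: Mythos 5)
First, a point of orientation: the paper does not prove this statement at all --- it is imported verbatim from Hug--Lutwak--Yang--Zhang \cite{10} --- so the relevant comparison is with the proof there. Your overall strategy (minimize $\sum_j\alpha_j h_j^n$ at fixed volume, read the measure off the Lagrange condition, exploit the dilation invariance of $S_n$ to fix the normalization, and derive uniqueness from the $L_p$ Minkowski inequality) is exactly the strategy of \cite{10}. Your uniqueness argument is complete and correct, including the key observation that the normalization forces $\int_{S^{n-1}}h(P_2,u)^n\,d\mu(u)=n$ and hence $V_n(P_1,P_2)=|P_1|$, after which the equality case of Theorem 2 and the volume comparison finish the job.

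The genuine gap is the one you yourself flag, and your proposed repair does not close it. Boundedness of a minimizing sequence is fine ($\Phi(h^{(k)})\le C$ forces $h_j^{(k)}\le(C/\alpha_j)^{1/n}$, and the no-hemisphere hypothesis then traps $P(h^{(k)})$ in a fixed ball), but if some $h_j^{(k)}\to 0$ the Blaschke limit $P^*$ has the origin on its boundary. Raising $h_j$ from $0$ to $\e$ costs $\alpha_j\e^n$ in $\Phi$ while the volume gained is of order $a_j\e$, where $a_j$ is the $(n-1)$-area of the face of $P^*$ in the hyperplane $\{x\cdot u_j=0\}$; if that face is lower-dimensional, or if several $h_j$ vanish simultaneously, this competitor need not beat the minimizer, so the perturbation you describe is insufficient. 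The argument in \cite{10} is genuinely different at this point: one translates the body so as to move the origin strictly into the interior and shows, using the convexity properties of $t\mapsto t_+^{p}$ for $p>1$, that the functional strictly decreases under a suitable translation while the volume is unchanged. That translation lemma is the technical heart of the existence proof and cannot be replaced by the local constraint-activation argument you sketch. A secondary point: before invoking Lagrange multipliers you should also rule out inactive or non-facet-defining constraints at the minimizer (otherwise $a_j=0$ contradicts $n\alpha_j(h_j^*)^{n-1}=\lambda a_j$ with $\alpha_j>0$); this does follow by decreasing $h_j$ slightly, but that move is available precisely because $h_j^*>0$, i.e.\ only after the nondegeneracy step has been settled.
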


\section{The discrete functional $L_p$ Minkowski problem}
We assume $1<p<\infty$ and $p\neq n$ throughout this section.

\subsection{The discrete case.}
\begin{theorem} Given a nontrivial function $f\in L^{1,0}(\mathbb R^n)$, there exists a
unique polytope $\langle f\rangle_p\in\mathcal P_0^n$ such that
\begin{equation}\label{4}
n\int_{\mathbb R^n}\Psi^p(-\nabla f(x))dx=\int_{S^{n-1}}\Psi(u)^pdS_p(\langle f\rangle_p,u),
\end{equation}
for every continuous function $\Psi:\mathbb R^n\rightarrow[0,\infty)$ that is homogeneous of
degree $1$.
\end{theorem}
\begin{proof}
Let $\Sigma=\{x:\nabla f(x)=0\}.$ Then the map $$\Upsilon\mapsto n\int_{\mathbb R^n\backslash \Sigma}\Upsilon(-\frac{\nabla f(x)}{|\nabla f(x)|})|\nabla f(x)|^pdx$$
defines a nonnegative bounded linear functional on the space of continuous functions on $S^{n-1}$. It follows from the Riesz representation
theorem that there exists a unique Borel measure $S_p(f,\cdot)$ on $S^{n-1}$ such that
$$n\int_{\mathbb R^n\backslash\Sigma}\Upsilon(-\frac{\nabla f(x)}{|\nabla f(x)|})|\nabla f(x)|^pdx=\int_{S^{n-1}}\Upsilon(u)dS_p(f,u),$$
for each continuous function $\Upsilon: S^{n-1}\rightarrow\mathbb R.$

Choose $\Upsilon=\Psi^p$ and radially extend $\Psi$ to $\mathbb R^n$ by requiring $\Psi$ to be of homogeneity degree $1$. Then we have
\begin{eqnarray*}
& &n\int_{\mathbb R^n}\Psi^p(-\nabla f(x))dx\\
&=&n\int_{\mathbb R^n\backslash \Sigma}\Psi^p(-\nabla f(x))dx\\
&=&n\int_{\mathbb R^n\backslash \Sigma}\Psi^p(-\frac{\nabla f(x)}{|\nabla f(x)|})|\nabla f(x)|^pdx\\
&=&\int_{S^{n-1}}\Psi^p(u)dS_p(f,u).
\end{eqnarray*}

We claim that $S_p(f,u)$ is a finite combination of Dirac measures on the sphere for any $f\in L^{1,0}(\mathbb R^n)$.

Notice from the definition, we have
$$S_p(f(x),\cdot)=S_p(f(x+x_0),\cdot),$$
for any $x_0\in\mathbb R^n.$

Next we consider a triangulation of the support of $f$, i.e. on each
simplex $M_i$, $f$ is affine, where $i=1,...,m.$

Without loss of generality, we assume
$$f(x)=x\cdot u_i+c_i$$
for $x\in M_i$. Then we have
\begin{eqnarray*}
& &n\int_{\mathbb R^n}\Psi^p(-\nabla f(x))dx\\
&=&n\sum_{i=1}^{m}\int_{M_i}\Psi^p(-\nabla f(x))dx\\
&=&n\sum_{i=1}^{m}|M_i|\Psi^p(-u_i)dx.
\end{eqnarray*}
Therefore we have $$S_p(\langle f\rangle_p,\cdot)=n\sum_{i=1}^{m}|M_i|\delta_{-u_i},$$
where $\delta_{-u_i}$ is the Dirac measure concentrated on $-u_i.$

We remark that the fact that $S_p(f,u)$ does not concentrate on any great sub-sphere on $S^{n-1}$ follows basically from the same argument of Lemma 4.1 in \cite{20}. In fact, by tracing back to the definition, one gets $$S_p(f,u)(S^{n-2})\neq 0,$$ where $S^{n-2}$ is any big sphere of $S^{n-1}$.

Therefore, for any $f\in L^{1,0}(\mathbb R^n)$, $S_p(f,u)$ is a linear combination of the Dirac measures on the sphere which do not concentrate on any great subsphere. By using Theorem 4, we prove the existence of the polytope $\langle f\rangle_p\in\mathcal P_0^n.$

The uniqueness is a consequence of the $L_p$ Minkowski inequality (Theorem 2).
\end{proof}

For $P\in\mathcal P_0^n$, define the piecewise affine function $l_P$ by requiring that $l_P(0)=1,$ that $l_P(x)=0$ for $x\not\in P$, and that $l_P$ is affine on each simplex with apex at the origin and base equal to a facet of $P$. Define $\mathcal P^{1,p}\subset L^{1,0}(\mathbb R^n)$ as the set of all $l_P$ for $P\in P_0^n$. Note that for $\phi\in GL(n),$ $$l_{\phi P}=l_P\circ\phi^{-1}.$$

We remark that multiples and translates of $l_P\in P^{1,p}(\mathbb R^n)$ correspond to linear elements within the theory of finite elements.

\begin{lemma}
For $P\in\mathcal P_0^n,$ $$\langle l_P\rangle_p=P.$$
\end{lemma}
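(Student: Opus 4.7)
The plan is to verify that $P$ itself satisfies the defining equation (\ref{4}) with $f = l_P$, so that uniqueness in Theorem 1 yields $\langle l_P\rangle_p = P$.

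First I would make the piecewise affine structure of $l_P$ explicit. Let $F_1,\dots,F_m$ be the facets of $P$, let $u_1,\dots,u_m\in S^{n-1}$ be their outer unit normals, and set $h_i = h(P,u_i)$. For each $i$, let $T_i = \operatorname{conv}(\{0\}\cup F_i)$; these are the simplices with apex at the origin and bases equal to the facets of $P$, and they cover $P$ with pairwise disjoint interiors. On each $T_i$ the function $l_P$ is the unique affine function equal to $1$ at the origin and to $0$ on the hyperplane $\{x\cdot u_i = h_i\}$ containing $F_i$, so
\begin{equation*}
l_P(x) = 1 - \frac{x\cdot u_i}{h_i}, \qquad x\in T_i,
\end{equation*}
and hence $-\nabla l_P = u_i/h_i$ on the interior of $T_i$.

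Next I would compute both sides of (\ref{4}) for $f=l_P$. Since the facet $F_i$ has $(n-1)$-dimensional Hausdorff measure $H^{n-1}(F_i)$ and the cone $T_i$ has height $h_i$ with respect to $F_i$, we have $|T_i| = h_i H^{n-1}(F_i)/n$. Using the homogeneity of $\Psi$ of degree $1$,
\begin{equation*}
n\int_{\mathbb{R}^n}\Psi^p(-\nabla l_P(x))\,dx
= n\sum_{i=1}^m |T_i|\,\Psi^p\!\left(\frac{u_i}{h_i}\right)
= \sum_{i=1}^m h_i^{1-p} H^{n-1}(F_i)\,\Psi^p(u_i).
\end{equation*}
On the other hand, the classical surface area measure of the polytope $P$ is $S(P,\cdot) = \sum_i H^{n-1}(F_i)\,\delta_{u_i}$, so $dS_p(P,\cdot) = h(P,\cdot)^{1-p}\,dS(P,\cdot) = \sum_i h_i^{1-p} H^{n-1}(F_i)\,\delta_{u_i}$, which gives
\begin{equation*}
\int_{S^{n-1}}\Psi(u)^p\,dS_p(P,u) = \sum_{i=1}^m h_i^{1-p} H^{n-1}(F_i)\,\Psi^p(u_i).
\end{equation*}
The two expressions agree for every continuous $1$-homogeneous $\Psi\geq 0$.

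Finally, by the uniqueness clause in Theorem 1 (which rests on the $L_p$ Minkowski inequality, Theorem 2), the polytope $\langle l_P\rangle_p\in \mathcal{P}_0^n$ satisfying (\ref{4}) for $f=l_P$ is unique, so $\langle l_P\rangle_p = P$. No real obstacle is expected beyond bookkeeping; the only point that deserves a remark is that the union $T_1\cup\dots\cup T_m$ genuinely triangulates $P$ because $0$ lies in the interior of $P$, so every facet $F_i$ is visible from the origin and the cones $T_i$ have pairwise disjoint interiors.
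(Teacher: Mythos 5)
Your proposal is correct and follows essentially the same route as the paper: decompose $P$ into the cones $T_i=\operatorname{conv}(\{0\}\cup F_i)$, compute $-\nabla l_P=u_i/h(P,u_i)$ on each $T_i$, use $|T_i|=h(P,u_i)H^{n-1}(F_i)/n$ to match $n\int\Psi^p(-\nabla l_P)\,dx$ with $\int\Psi^p\,dS_p(P,\cdot)$, and invoke uniqueness. The only cosmetic difference is that you appeal directly to the uniqueness clause of Theorem 1 while the paper identifies the measures and cites the uniqueness in the polytopal $L_p$ Minkowski problem; these amount to the same thing.
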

\begin{proof}
Let $P$ have facets $F_1,...,F_m.$ For the facet $F_i$, let $u_i$ be its unit outer normal vector and $T_i$ the convex hull of $F_i$ and the origin. Since for $x\in T_i,$
$$l_P(x)=-\frac{u_i}{h(P,u_i)}\cdot x+1$$
and
$$\nabla l_P(x)=-\frac{u_i}{h(P,u_i)},$$
we have that
\begin{eqnarray*}
n\int_{\mathbb R^n}\Psi^p(-\nabla l_P(x))dx &=& n\sum_{i=1}^m\int_{T_i}\Psi^p(-\nabla l_P(x))dx\\
&=& n\sum_{i=1}^m\int_{T_i}\Psi^p(\frac{u_i}{h(P,u_i)})dx\\
&=& n\sum_{i=1}^m\Psi^p(u_i)\frac{V_n(T_i)}{h(P,u_i)^p}\\
&=& \sum_{i=1}^m\Psi^p(u_i)V_{n-1}(F_i)h(P,u_i)^{1-p}\\
&=& \int_{S^{n-1}}\Psi^p(u)dS_p(P,u).
\end{eqnarray*}
Since $S_p(P,\cdot)=S_p(\langle 1_P\rangle_p,\cdot),$ we have $P=\langle 1_P\rangle_p$ from Theorem 4.
\end{proof}

\begin{lemma}
If $f\in L^{1,0}(\mathbb R^n)$ and $\phi\in SL(n)$, then
$$\langle f\circ\phi^{-1}\rangle_p=\phi\langle f\rangle_p.$$
\end{lemma}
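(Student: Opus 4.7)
The plan is to invoke the uniqueness in Theorem 1: it suffices to show that the polytope $\phi\langle f\rangle_p$ satisfies equation \eqref{4} for $f\circ\phi^{-1}$, against every admissible test function $\Psi$.

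First I would transform the left-hand side. By the chain rule $\nabla(f\circ\phi^{-1})(x)=\phi^{-T}\nabla f(\phi^{-1}x)$ and the substitution $y=\phi^{-1}x$ (with $|\dt\phi|=1$),
\[
n\int_{\mathbb R^n}\Psi^p(-\nabla(f\circ\phi^{-1})(x))\,dx=n\int_{\mathbb R^n}\Psi^p(-\phi^{-T}\nabla f(y))\,dy.
\]
The function $\tilde\Psi(v):=\Psi(\phi^{-T}v)$ is again continuous, nonnegative, and positively homogeneous of degree $1$, so it is admissible as a test function in \eqref{4} for $\langle f\rangle_p$. Applying that identity with $\tilde\Psi$ reduces the problem to the $SL(n)$-transformation rule
\[
\int_{S^{n-1}}\Psi(\phi^{-T}u)^p\,dS_p(\langle f\rangle_p,u)=\int_{S^{n-1}}\Psi(u)^p\,dS_p(\phi\langle f\rangle_p,u).
\]

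The remaining and main step is to establish this rule for a polytope $P\in\mathcal P_0^n$. I would argue facet by facet: if $F_i$ is a facet of $P$ with unit outer normal $u_i$, then $\phi F_i$ is a facet of $\phi P$ with unit outer normal $v_i=\phi^{-T}u_i/|\phi^{-T}u_i|$, $(n-1)$-measure $|\dt\phi|\,|\phi^{-T}u_i|\,V_{n-1}(F_i)$, and support value $h(\phi P,v_i)=h(P,\phi^T v_i)=h(P,u_i)/|\phi^{-T}u_i|$. Substituting these into $dS_p(\phi P,\cdot)=h(\phi P,\cdot)^{1-p}\,dS(\phi P,\cdot)$ and using the degree-$1$ homogeneity identity $|\phi^{-T}u_i|^p\Psi(v_i)^p=\Psi(\phi^{-T}u_i)^p$, the factors of $|\phi^{-T}u_i|$ combine so that the two sides agree term by term; then uniqueness in Theorem 1 finishes the proof. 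The main obstacle is precisely this bookkeeping with facet areas and support values under $\phi$. Alternatively, one could first check the rule for $\Psi=h(M,\cdot)$ with $M\in\mathcal K_0^n$, using $V_p(\phi K,\phi L)=|\dt\phi|\,V_p(K,L)$, and then extend by density of differences of support functions within the space of continuous, positively homogeneous, degree-$1$ functions on $\mathbb R^n$.
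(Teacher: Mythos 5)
Your argument is correct, and it takes a different (more hands-on) route than the paper. The paper, after the same change of variables $y=\phi^{-1}x$, tests the defining identity only against support functions $\Psi=h(L,\cdot)$ of arbitrary $L\in\mathcal K_0^n$: this turns both sides into $L_p$ mixed volumes, and the proof is finished by quoting the affine invariance $V_p(\phi K,L)=V_p(K,\phi^{-1}L)$ together with the fact that a body in $\mathcal K_0^n$ is determined by its $L_p$ mixed volumes against all bodies (a consequence of the $L_p$ Minkowski inequality). You instead work at the level of the measure: you prove the $SL(n)$ transformation rule $\int_{S^{n-1}}\Psi(\phi^{-T}u)^p\,dS_p(P,u)=\int_{S^{n-1}}\Psi(u)^p\,dS_p(\phi P,u)$ directly, facet by facet, and then invoke the uniqueness clause of Theorem 1. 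Your bookkeeping is right: the cone-volume identity $nV_n(\operatorname{conv}(F_i,0))=h(P,u_i)V_{n-1}(F_i)$ gives exactly the stated area factor $|\dt\phi|\,|\phi^{-T}u_i|\,V_{n-1}(F_i)$, and the powers of $|\phi^{-T}u_i|$ coming from $h(\phi P,v_i)^{1-p}$, the facet area, and the homogeneity of $\Psi$ cancel as claimed. What your version buys is self-containedness — the equivariance of $S_p$ under $SL(n)$ is verified rather than imported, and it is tested against all admissible $\Psi$, not just support functions; what the paper's version buys is brevity, since it leans on standard properties of $V_p$ from \cite{20}. Your closing remark about checking the rule only on $\Psi=h(M,\cdot)$ is essentially the paper's proof.
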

\begin{proof}
It is well-known (e.g. see \cite{20}) that if
$$V_p(\phi K,L)=V_p(J,\phi^{-1}L)~ for~ all~ convex~ bodies~ L\in\mathcal K_0^n,$$
then $K=J.$

Since by the affine invariance of the $L_p$ mixed volume, we have
$$V_p(\phi\langle f\rangle_p,L)=V_p(\langle f\rangle_p,\phi^{-1}L),$$
and we know, by the change of variable $y=\phi(x)$, that
\begin{eqnarray*}
\int_{\mathbb R^n}h(L,\nabla(f\circ\phi^{-1})(u))^pdy&=&\int_{\mathbb R^n}h(L,\phi^{-t}\nabla f(\phi^{-1}y))^pdy\\
&=& \int_{\mathbb R^n}h(\phi^{-1}L,\nabla f(x))^pdx.
\end{eqnarray*}
Therefore, we have that $V_p(\langle f\circ\phi^{-1}\rangle_p,L)=V_p(\langle f\rangle_p,\phi^{-1}L),$ which implies that $\langle f\circ\phi^{-1}\rangle_p=\phi\langle f\rangle_p.$
\end{proof}
As a quick consequence of Lemma 3, we have the following corollary.
\begin{corollary}
 Given $\phi\in SL(n)$ and $f\in L^{1,0}(\mathbb R^n)$, we have
 $$|\langle f\rangle_p|=|\langle f\circ\phi^{-1}\rangle_p|.$$
\end{corollary}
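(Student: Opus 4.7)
The plan is to deduce this corollary as an immediate consequence of Lemma 3 together with the fact that special linear maps preserve $n$-dimensional Lebesgue volume. So there is essentially one line of content, and the task reduces to recording it cleanly.

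First I would invoke Lemma 3 to replace $\langle f\circ\phi^{-1}\rangle_p$ by $\phi\langle f\rangle_p$. Then I would observe that for any convex body $K\in\K^n$ and any $\phi\in GL(n)$ one has $|\phi K|=|\dt\phi|\cdot|K|$ by the change-of-variables formula for Lebesgue measure. Specializing to $\phi\in SL(n)$, where $|\dt\phi|=1$, yields $|\phi\langle f\rangle_p|=|\langle f\rangle_p|$, which is the desired identity.

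There is no real obstacle here: Lemma 3 has already absorbed all the work (checking that $\langle\cdot\rangle_p$ intertwines $SL(n)$-actions on functions and convex bodies), and the residual claim is the standard determinant-preserving property of $SL(n)$. The proof is therefore a one-line chain of equalities
\[
|\langle f\circ\phi^{-1}\rangle_p|=|\phi\langle f\rangle_p|=|\dt\phi|\cdot|\langle f\rangle_p|=|\langle f\rangle_p|,
\]
and I would present it exactly in this form.
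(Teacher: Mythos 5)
Your proof is correct and matches the paper's intent exactly: the paper states the corollary as a ``quick consequence of Lemma 3,'' and the only additional ingredient is precisely the one you supply, namely that $\phi\in SL(n)$ preserves Lebesgue volume since $|\det\phi|=1$.
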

The operator $\langle\cdot\rangle_p:L^{1,0}(\mathbb R^n)\rightarrow \mathcal P_0^n$ has the  valuation property.

\begin{definition}
 For two polytopes $K,L\in\mathcal P_0^n$, we define the $L_p$-Blaschke sum $K\sharp_pL$ of $K,L$ by requiring
 \begin{equation}\label{5}
 S_p(K\sharp_pL,\cdot)=S_p(K,\cdot)+S_p(L,\cdot).
 \end{equation}
\end{definition}
Remark: By Theorem 4, we know there is a polytope $K\sharp_pL$ uniquely defined through equation (\ref{5}).

For $f,g\in L^{1,0}(\mathbb R^n)$, the function $f\vee g$ denotes the pointwise maximum and the function $f\wedge g$ the pointwise minimum of $f$ and $g$.

\begin{theorem}
The operator $\langle\cdot\rangle_p:(L^{1,0}(\mathbb R^n),\wedge,\vee)\rightarrow(\mathcal P_0^n,\sharp_p)$ is an $L_p$-Blaschke valuation.
\end{theorem}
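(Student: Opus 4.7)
The plan is to reduce the $L_p$-Blaschke valuation identity $\langle f\vee g\rangle_p \sharp_p \langle f\wedge g\rangle_p = \langle f\rangle_p \sharp_p \langle g\rangle_p$ to a pointwise gradient identity. By Definition 2 together with the uniqueness clause of Theorem 1 (which recovers $S_p(\langle f\rangle_p,\cdot)$ from the integrals $n\int\Psi^p(-\nabla f)\,dx$), it suffices to establish, for every continuous $\Psi:\mathbb R^n\to[0,\infty)$ positively homogeneous of degree $1$, the integral identity
$$\int_{\mathbb R^n}\Psi^p(-\nabla(f\vee g))\,dx+\int_{\mathbb R^n}\Psi^p(-\nabla(f\wedge g))\,dx=\int_{\mathbb R^n}\Psi^p(-\nabla f)\,dx+\int_{\mathbb R^n}\Psi^p(-\nabla g)\,dx.$$

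First I would verify that $f\vee g$ and $f\wedge g$ actually lie in $L^{1,0}(\mathbb R^n)$. Take a common simplicial refinement $\{T_i\}$ on whose simplices both $f$ and $g$ are affine. On each $T_i$ the difference $f-g$ is affine, so $T_i\cap\{f\geq g\}$ and $T_i\cap\{f\leq g\}$ are convex polytopes meeting only along the slice $T_i\cap\{f=g\}$; triangulating these polytopes produces a finite simplicial decomposition on which $f\vee g$ and $f\wedge g$ are affine, with continuity and compact support inherited from $f,g$.

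The decisive pointwise observation is that, almost everywhere on $\mathbb R^n$, the unordered pair $\{\nabla(f\vee g)(x),\nabla(f\wedge g)(x)\}$ coincides with $\{\nabla f(x),\nabla g(x)\}$. On the open set $\{f>g\}$ one has $f\vee g=f$ and $f\wedge g=g$; on $\{f<g\}$ the roles swap; and on any full-dimensional piece of $\{f=g\}$ in the refinement, the two affine functions coincide identically, so all four gradients agree. The leftover locus, where $f=g$ but the local affine pieces of $f$ and $g$ genuinely disagree, is contained in a finite union of affine hyperplanes and therefore has Lebesgue measure zero. Substituting $\Psi^p$ term by term and integrating yields the desired integral identity.

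Combining this integral identity with Theorem 1, applied separately to each of $f,g,f\vee g,f\wedge g$, gives
$$S_p(\langle f\vee g\rangle_p,\cdot)+S_p(\langle f\wedge g\rangle_p,\cdot)=S_p(\langle f\rangle_p,\cdot)+S_p(\langle g\rangle_p,\cdot),$$
which, via Definition 2 and the uniqueness part of Theorem 4, is exactly the $L_p$-Blaschke valuation identity. The only delicate point is constructing the common simplicial refinement carefully enough to certify that the set where the pointwise gradient identity could fail is verifiably Lebesgue-null; once that is in place, the remainder is routine gradient bookkeeping.
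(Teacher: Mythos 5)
Your proposal is correct and follows essentially the same route as the paper: pass to a common simplicial refinement of the supports, observe that almost everywhere the unordered pair $\{\nabla(f\vee g),\nabla(f\wedge g)\}$ coincides with $\{\nabla f,\nabla g\}$ (the paper encodes this by splitting the refinement into the pieces where $f\geq g$ and where $f<g$), integrate $\Psi^p(-\cdot)$, and invoke the uniqueness in the discrete $L_p$ Minkowski problem. If anything, your explicit treatment of the interface set $\{f=g\}$ as a Lebesgue-null union of hyperplane slices is slightly more careful than the paper's ``without loss of generality'' decomposition.
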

\begin{proof}
 What we need to show is that
\begin{eqnarray*}
& &\int_{S^{n-1}}\Psi(u)^pdS_p(\langle f\rangle_p,u)+\int_{S^{n-1}}\Psi(u)^pdS_p(\langle g\rangle_p,u)\\
&=& \int_{S^{n-1}}\Psi(u)^pdS_p(\langle f\wedge g\rangle_p,u)+\int_{S^{n-1}}\Psi(u)^pdS_p(\langle f\vee g\rangle_p,u),
\end{eqnarray*}
for every continuous $\Psi:S^{n-1}\rightarrow\mathbb R$ and $f,g\in L^{1,0}(\mathbb R^n).$

For this we triangulate the support of $f$ and $g$ so that $f(x)=x\cdot u_i+c_i$ on $P_i$ for $i=1,...,m$ and $g(x)=x\cdot v_i+d_i$ on $Q_i$ for $i=1,...,n$, and without loss of generality, we assume
$$
\{x:f(x)\geq g(x)~ and ~g(x)\neq0\}=\bigcup_{k(i)}P_{k(i)}\cup\bigcup_{l(j)}Q_{l(j)},
$$
where $k(i)\in\{1,...,m\}$ and $l(j)\in\{1,...,n\}.$ Then we notice
\begin{eqnarray*}
& &\int_{S^{n-1}}\Psi(u)^pdS_p(\langle f\rangle_p,u)+\int_{S^{n-1}}\Psi(u)^pdS_p(\langle g\rangle_p,u)\\
&=& n(\int_{\mathbb R^n}\Psi(-\nabla f(x))^pdx+\int_{\mathbb R^n}\Psi(-\nabla g(x))^pdx)\\
&=& n(\sum_{i=1}^m\Psi(-u_i)^p|P_i|+\sum_{j=1}^{j=n}\Psi(-v_j)^p|Q_j|).
\end{eqnarray*}
Similarly,
\begin{eqnarray*}
& &\int_{S^{n-1}}\Psi(u)^pdS_p(\langle f\wedge g\rangle_p,u)+\int_{S^{n-1}}\Psi(u)^pdS_p(\langle f\vee g\rangle_p,u)\\
&=& n(\int_{\mathbb R^n}\Psi(-\nabla(f\wedge g)(x))^pdx+\int_{\mathbb R^n}\Psi(-\nabla(f\wedge g)(x))^pdx)\\
&=& n(\sum_{k(i),l(j)}(\Psi(-u_{k(i)})^p|P_{k(i)}|+\Psi(-v_{l(j)})^p|Q_{l(j)}|)\\
& & + \sum_{i\neq k(i)}\Psi(-u_i)^p|P_i|+\sum_{j\neq l(j)}\Psi(-v_j)^p|Q_j|)\\
&=& n(\sum_{i=1}^m\Psi(-u)^p|P_i|+\sum_{j=1}^{n}\Psi(-v_j)^p|Q_j|).
\end{eqnarray*}
This concludes the proof.
\end{proof}

In \cite{13}, Monika Ludwig started the search of interesting valuations on function spaces. She found out that the only reasonable valuation that enjoys some natural affine invariance property is the LYZ operator defined by equation (\ref{1}). Motivated by this, we propose a Ludwig type conjecture: if $Z:(L^{1,0}(\mathbb R^n),\wedge,\vee)\rightarrow(\mathcal P_0^n,\sharp_p)$ is a continuous $L_p$ Blaschke valuation which satisfies
$$Z(f)=Z(f\circ\tau^{-1}), Z(f\circ\phi^{-1})=|\det\phi|^q\phi Z(f)$$
for all $\tau\in\mathbb R^n$, $\phi\in GL(n)$ and some $q\in\mathbb R$, then
$$Z=c\langle\cdot\rangle_p$$
for some constant $c\geq0.$

\subsection {The homothetic functional $L_p$ Minkowski problem.}
Given $f\in W^{1,p}(\mathbb R^n)$, its distribution function $\mu_f:[0,\infty)\rightarrow[0,\infty]$ is defined by
$$\mu_f(t)=|\{x\in\mathbb R^n:|f(x)|>t\}|,$$
where $|\cdot|$ denotes Lebesgue measure on $\mathbb R^n$. The decreasing rearrangement $f^\ast:[0,\infty)\rightarrow[0,\infty]$ of $f$ is defined by $$f^\ast(s)=\inf\{t>0:\mu_f(t)\leq s\}.$$

The symmetric rearrangement $f^\ast:\mathbb R^n\rightarrow[0,\infty]$ of $f$ is the function defined by
$$f^\star(x)=f^\ast(\omega_n|x|^n),$$
where $|\cdot|$ is the standard Euclidean norm.

For a convex body $K\in\mathcal K_0^n$, the convex symmetrization $f^K$ of $f$ with respect to $K$ is defined as follows:
$$f^K(x)=f^\ast(\omega_nh(\tilde{K}^\circ,x)^n),$$
where $h(\tilde{K}^\circ,x)$ is the support function of $\tilde{K}^\circ$, with $\tilde{K}$ being a dilation of $K$ so that $|\tilde{K}|=\omega_n$.

\begin{lemma}\label{forcorollary2}
Given $f^K\in W^{1,p}(\mathbb R^n)$ and $K\in\mathcal K_0^n$, there is a unique convex body $\langle f^K\rangle_p\in\mathcal K_0^n$ such that
$$n\int_{\mathbb R^n}\Psi^p(-\nabla f^K(x))dx=\int_{S^{n-1}}\Psi(u)^pdS_p(\langle f^K\rangle_p,u),$$
for every continous $\Psi$ that is homogeneous of degree $1$. Furthermore
$$\langle f^K\rangle_p=\xi_f\tilde{K},$$
where $\xi_f=(n(n\omega_n)^p\int_0^{\infty}t^{np+n-p-1}((-f^\ast)'(\omega_nt^n))^pdt)^{\frac{1}{n-p}}.$
\end{lemma}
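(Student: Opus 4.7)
The plan is to exploit the fact that, by construction, the super-level sets of $f^K$ are dilates of $\tilde K$, so $\langle f^K\rangle_p$ itself must be a dilate of $\tilde K$; only the scaling factor needs to be identified.

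First I would compute $-\nabla f^K$ in closed form. Writing $r=h(\tilde K^\circ, x)$ and applying the chain rule to $f^K(x)=f^\ast(\omega_n r^n)$ gives $-\nabla f^K(x)=-n\omega_n(f^\ast)'(\omega_n r^n)r^{n-1}\nabla h(\tilde K^\circ, x)$. At a point $x=ry$ with $y\in\partial\tilde K$ having outer unit normal $u$, the duality between $\tilde K$ and $\tilde K^\circ$ yields $\nabla h(\tilde K^\circ, x)=u/h(\tilde K, u)$, so that $-\nabla f^K(x)=|\nabla f^K(x)|\,u$ with $|\nabla f^K(x)|=n\omega_n(-(f^\ast)'(\omega_n r^n))r^{n-1}/h(\tilde K, u)$.

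Next I would apply the coarea formula. Using $p$-homogeneity, $\Psi^p(-\nabla f^K)=\Psi^p(u)|\nabla f^K|^p$, so the left-hand side becomes an iterated integral over $t$-level sets of $\Psi^p(u)|\nabla f^K|^{p-1}$. On $\{f^K=t\}=r\partial\tilde K$ the integrand depends only on $r$ and on the outer normal $u$ to $\tilde K$; parametrizing the level set as $r\partial\tilde K$ (surface Jacobian $r^{n-1}$), converting $\int_{\partial\tilde K}G(n(y))\,d\mathcal H^{n-1}(y)=\int_{S^{n-1}}G(u)\,dS(\tilde K, u)$, absorbing $h(\tilde K, u)^{1-p}\,dS(\tilde K, u)$ into $dS_p(\tilde K, u)$, and finally changing variable $t\mapsto r$ via $dt=(f^\ast)'(\omega_n r^n)n\omega_n r^{n-1}dr$ separates the calculation into a product of a one-dimensional $r$-integral and the spherical integral $\int\Psi^p\,dS_p(\tilde K, \cdot)$. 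Bookkeeping of the $r$-powers produces the exponent $(n-1)p+n-1=np+n-p-1$, and the one-dimensional factor is precisely $\xi_f^{\,n-p}/n$ for the stated $\xi_f$. Combining this with the scaling $S_p(tK', \cdot)=t^{n-p}S_p(K', \cdot)$ identifies $\langle f^K\rangle_p=\xi_f\tilde K$. Uniqueness within $\mathcal K_0^n$ follows from the $L_p$ Minkowski inequality (Theorem 2), exactly as in Theorem 1: two convex bodies producing the same $L_p$ surface area measure coincide.

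The main obstacle I anticipate is handling regularity. The identity $\nabla h(\tilde K^\circ, x)=u/h(\tilde K, u)$ holds only where $h(\tilde K^\circ, \cdot)$ is differentiable at $x=ry$ and the outer normal to $\tilde K$ at $y$ is unique; both properties fail on exceptional subsets of $\mathbb R^n$ and $\partial\tilde K$ when $\tilde K$ is not smooth. A Fubini/coarea argument should show that these exceptional sets meet each level surface in $\mathcal H^{n-1}$-measure zero, so they contribute nothing, while the hypothesis $f^K\in W^{1,p}(\mathbb R^n)$ supplies the integrability required for every step and makes the one-dimensional integral defining $\xi_f$ finite.
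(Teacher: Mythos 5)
Your proposal is correct and follows essentially the same route as the paper: chain rule on $f^\ast(\omega_n h(\tilde K^\circ,x)^n)$, the duality identities $\sigma_{\tilde K}=\nabla h(\tilde K^\circ,\cdot)/|\nabla h(\tilde K^\circ,\cdot)|$ and $h(\tilde K,\sigma_{\tilde K}(x))=1/|\nabla h(\tilde K^\circ,x)|$, the coarea formula to factor the integral into a one-dimensional $t$-integral times $\int_{S^{n-1}}\Psi^p\,dS_p(\tilde K,\cdot)$, and uniqueness via the $L_p$ Minkowski inequality. You are in fact slightly more careful than the paper on two points it leaves implicit: the a.e.\ regularity of $h(\tilde K^\circ,\cdot)$ on level sets, and the scaling $S_p(tK',\cdot)=t^{n-p}S_p(K',\cdot)$ needed to pass from $S_p(\langle f^K\rangle_p,\cdot)=\xi_f^{\,n-p}S_p(\tilde K,\cdot)$ to $\langle f^K\rangle_p=\xi_f\tilde K$.
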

\begin{proof}
We remark that the proof here is similar to that in \cite{29}.

First, we notice that the uniqueness is a quick consequence of the $L_p$
Minkowski inequality (Equation (\ref{1})).

Since $h(\tilde{K}^\circ,\cdot)$ is a Lipschitz function and $h(\tilde{K}^\circ,\cdot)=1$ on $\partial\tilde{K},$ for almost all $x\in\partial\tilde{K}$,
$$\sigma_{\tilde{K}}(x)=\frac{\nabla h(\tilde{K}^\circ,x)}{|\nabla h(\tilde{K}^\circ,x)|},$$
where $\tilde{\sigma_K}(x)$ is the outer unit normal vector of $\tilde{K}$ at the point $x$. And
from the definition of the polar body, we have $$h(\tilde{K},\sigma_K(x))=\frac{1}{|\nabla h(\tilde{K}^\circ,x)|}.$$

Also, it is well known that $f^\ast$ is locally absolutely continuous on $(0,\infty)$. We have, by the co-area formula applied to $h(\tilde{K}^\circ,\cdot)$, that:
\begin{eqnarray*}
& &n\int_{\mathbb R^n}\Psi^p(-\nabla f^K(x))dx\\
&=&n\int_{\mathbb R^n}\Psi^p(-(f^\ast)'(\omega_nh(\tilde{K}^\circ,x)^n)n\omega_nh(\tilde{K}^\circ,x)^{n-1}\nabla h(\tilde{K}^\circ,x))dx\\
&=&n\int_{0}^{\infty}\int_{\partial K}t^{n-1}((-(f^\ast)'(\omega_nt^n))n\omega_nt^{n-1})^p\Psi^p(\frac{\nabla h(\tilde{K}^\circ,x)}{|\nabla h(\tilde{K}^\circ,x)|})|\nabla h(\tilde{K}^\circ,x)|^{p-1}dH^{n-1}(x)dt\\
&=&n(n\omega_n)^p\int_0^\infty t^{np+n-p-1}((-f^\ast)'(\omega_nt^n))^pdt\int_{S^{n-1}}\Psi^p(u)h^{1-p}(\tilde{K},u)dS(\tilde{K},u)\\
&=&n(n\omega_n)^p\int_0^\infty t^{np+n-p-1}((-f^\ast)'(\omega_nt^n))^pdt\int_{S^{n-1}}\Psi^p(u)dS_p(\tilde{K},u)
\end{eqnarray*}
for any $\Psi$ that is homogeneous of degree $1$.

So we get $$S_p(\langle f^K\rangle_p,u)=\xi_fS_p(\tilde{K},u),$$
where $\xi_f=n(n\omega_n)^p\int_0^\infty t^{np+n-p-1}((-f^\ast)'(\omega_nt^n))^pdt.$
This concludes the proof.
\end{proof}

\begin{corollary}
 If $f\in L^{1,0}(\mathbb R^n)$ and $K,L\in \mathcal P_0^n,$ then $\langle f^K\rangle_p$ is a dilate of $K$ and $$|\langle f^K\rangle_p|=|\langle f^L\rangle_p|.$$
\end{corollary}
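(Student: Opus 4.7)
The plan is to reduce the corollary to a direct application of Lemma \ref{forcorollary2} together with the standard equimeasurability of convex rearrangements. First I would verify that $f^K, f^L \in W^{1,p}(\mathbb R^n)$: since $f \in L^{1,0}(\mathbb R^n)$ is bounded, compactly supported and piecewise affine, its decreasing rearrangement $f^*$ is bounded, compactly supported and locally absolutely continuous, so its composition with the Lipschitz function $\omega_n h(\tilde{K}^\circ,\cdot)^n$ is a compactly supported Lipschitz function, hence lies in $W^{1,p}$. Lemma \ref{forcorollary2} then applies and delivers $\langle f^K\rangle_p = \xi_{f^K}\,\tilde{K}$ and $\langle f^L\rangle_p = \xi_{f^L}\,\tilde{L}$, which are dilates of $K$ and $L$ respectively, settling the first claim.

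For the volume equality, the key observation is that the scalar $\xi_{f^K}$ appearing in Lemma \ref{forcorollary2} depends only on the decreasing rearrangement $(f^K)^*$ of the input. It therefore suffices to verify the standard equimeasurability $(f^K)^* = f^* = (f^L)^*$. This follows from a direct computation with the definition $f^K(x) = f^*(\omega_n h(\tilde{K}^\circ,x)^n)$: combining it with the relation $f^*(s)>t \Leftrightarrow s<\mu_f(t)$ one sees that $\{f^K > t\}$ is precisely the dilate of $\tilde{K}$ of volume $\mu_f(t)$, so $\mu_{f^K} = \mu_f$ and hence $(f^K)^* = f^*$. The identical argument for $L$ gives $\xi_{f^K} = \xi_{f^L} = \xi_f$.

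Since $|\tilde{K}| = |\tilde{L}| = \omega_n$ by the definition of the dilation, we conclude
\[|\langle f^K\rangle_p| = \xi_f^n\,|\tilde{K}| = \xi_f^n\,\omega_n = \xi_f^n\,|\tilde{L}| = |\langle f^L\rangle_p|,\]
which is the second claim. I do not anticipate any serious obstacle; the only mildly delicate point is correctly accounting for the exponent $1/(n-p)$ built into $\xi_f$ in Lemma \ref{forcorollary2}, which is there precisely to absorb the $L_p$ surface area measure scaling $S_p(cM,\cdot) = c^{n-p}\,S_p(M,\cdot)$ used to extract the dilation factor in the proof of the lemma.
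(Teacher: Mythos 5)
Your proposal is correct and follows essentially the same route as the paper: both reduce the corollary to Lemma \ref{forcorollary2}, observing that the dilation factor $\xi_f$ depends only on the decreasing rearrangement $f^\ast$ (hence is the same for $K$ and $L$) and that $|\tilde{K}|=|\tilde{L}|=\omega_n$ by the normalization. Your added checks (that $f^K\in W^{1,p}(\mathbb R^n)$ and that $(f^K)^\ast=f^\ast$ by equimeasurability) are correct details the paper leaves implicit, since its $\xi_f$ is already written directly in terms of $f^\ast$.
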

\begin{proof}
It is an immediate corollary of Lemma 4. Because from Lemma 4, we have $$\langle f^K\rangle_p=\xi_f\tilde{K},$$ and $$\langle f^L\rangle_p=\xi_f\tilde{L}.$$
We know from the definition that $|\tilde{K}|=|\tilde{L}|$, therefore Corollary 2 follows.
\end{proof}

\section{Applications}
\subsection{General $L_p$ Affine Sobolev Inequalities.}
The general $L_p$ Sobolev inequality established in \cite{1} could be stated as follows:
\begin{theorem}[\cite{1}] Let $K\in\mathcal K_0^n$ be normalized such that $|K|=n\omega_n$. Then $$\alpha_{n,p}(\int_{\mathbb R^n}h(K,\nabla f(x))^pdx)^{1/p}\geq(\int_{\mathbb R^n}|f|^{p^\star}dx)^{1/{p^\star}},$$
for $1<p<n$ and $f\in W^{1,p}(\mathbb R^n)$, where $p^\star=\frac{np}{n-p}$. The optimal constants $\alpha_{n,p}$ are given by
\begin{equation}\label{6}
 \alpha_{n,p}=n^{-1/p}(\frac{p-1}{n-p})^{1-1/p}(\frac{\Gamma(n)}{\omega_n\Gamma(\frac{n}{p})\Gamma(n+1-\frac{n}{p})})^{1/n}.
\end{equation}
\end{theorem}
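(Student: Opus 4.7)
The plan is to deduce the general inequality from the classical ($K=B_2$) Aubin--Talenti inequality by convex symmetrization with respect to $K$, following the strategy of \cite{1}. The first step is a P\'olya--Szeg\H{o} principle for the convex symmetral: for any $f\in W^{1,p}(\mathbb R^n)$,
$$\int_{\mathbb R^n} h(K,\nabla f(x))^p\,dx \;\geq\; \int_{\mathbb R^n} h(K,\nabla f^K(x))^p\,dx,$$
while equimeasurability of $f$ with its convex symmetral $f^K$ forces $\|f^K\|_{p^\star}=\|f\|_{p^\star}$. It therefore suffices to verify the inequality for functions of the special form $f(x)=\varphi(h(\tilde K^\circ,x))$ with $\varphi$ nonincreasing on $[0,\infty)$, i.e., for $f=f^K$.

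The second step is a direct calculation on these convex--radial profiles. Using the chain rule, the identity $h(\tilde K,\sigma_{\tilde K}(x))=1/|\nabla h(\tilde K^\circ,x)|$ on $\partial\tilde K$, and the co-area formula applied to $h(\tilde K^\circ,\cdot)$---exactly the computation carried out in the proof of Lemma \ref{forcorollary2}---both sides collapse to one-dimensional integrals in the variable $t=h(\tilde K^\circ,x)$. The gradient integral becomes
$$\int_{\mathbb R^n} h(K,\nabla f^K(x))^p\,dx \;=\; n\,V_p(\tilde K,K)\int_0^\infty |\varphi'(t)|^p\, t^{n-1}\,dt,$$
and since $K$ is a dilate of $\tilde K$ (normalized by $|K|=n\omega_n$, $|\tilde K|=\omega_n$), the mixed volume $V_p(\tilde K,K)$ reduces to an explicit multiple of $\omega_n$; the $L^{p^\star}$ norm of $f^K$ similarly becomes a weighted one-dimensional integral of $\varphi$ after the substitution $s=\omega_n t^n$.

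What remains is a one-dimensional sharp inequality for nonincreasing $\varphi$ on the half-line, which is precisely Bliss's inequality, equivalent to the radial case of the classical Aubin--Talenti inequality. Tracking the normalizations through the reductions produces the constant $\alpha_{n,p}$ of (\ref{6}) and identifies the extremals as the convex symmetrizations of the Aubin--Talenti bubbles, namely functions of the form $(a+b\,h(\tilde K^\circ,x)^{p/(p-1)})^{1-n/p}$. The main obstacle will be the first step: the P\'olya--Szeg\H{o} inequality for convex symmetrization is the delicate ingredient, since it requires pairing the usual level-set rearrangement argument with the $L_p$ Minkowski inequality (Theorem 2) applied to each level set $\{|f|>t\}$ against a concentric dilate of $\tilde K$, in order to control $h(K,\nabla f)^p$ rather than just the Euclidean norm of $\nabla f$. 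The second and third steps, by contrast, are essentially bookkeeping against the known one-dimensional sharp inequality.
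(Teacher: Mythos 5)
The paper itself gives no proof of this statement: Theorem 8 is imported verbatim from Alvino--Ferone--Lions--Trombetti \cite{1}, so there is no in-paper argument to compare against. Your reconstruction does follow the actual strategy of \cite{1} --- reduce to $f=f^K$ by an anisotropic P\'olya--Szeg\H{o} principle, collapse to a one-dimensional integral via the co-area formula applied to $h(\tilde K^\circ,\cdot)$, and finish with Bliss's sharp one-dimensional inequality --- and your second and third steps, including the identity $\int h(K,\nabla f^K)^p\,dx=nV_p(\tilde K,K)\int_0^\infty|\varphi'(t)|^pt^{n-1}\,dt$ and the identification of the extremals, are correct bookkeeping.

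The one genuine flaw is in the step you yourself single out as delicate. You propose to prove the P\'olya--Szeg\H{o} inequality by applying ``the $L_p$ Minkowski inequality (Theorem 2) \ldots to each level set $\{|f|>t\}$.'' That cannot work as stated: Theorem 2 is an inequality for convex bodies with $p>1$, whereas the level sets of a generic $W^{1,p}$ function are merely sets of finite perimeter and need not be convex, so $V_p(\{|f|>t\},\tilde K)$ is not even defined. The correct ingredient is the anisotropic isoperimetric (Wulff) inequality
$$\int_{\partial^*\{|f|>t\}}h(K,\nu)\,dH^{n-1}\;\geq\;n\,|K|^{1/n}\,|\{|f|>t\}|^{(n-1)/n},$$
valid for all sets of finite perimeter (it is the classical, $p=1$, Minkowski-type inequality extended beyond convex sets), paired with the co-area formula and H\"older's inequality on each level surface in the form
$$\Bigl(\int_{\{|f|=t\}}h(K,\nu)\,dH^{n-1}\Bigr)^p\leq\Bigl(\int_{\{|f|=t\}}h(K,\nu)^p|\nabla f|^{p-1}\,dH^{n-1}\Bigr)\Bigl(\int_{\{|f|=t\}}|\nabla f|^{-1}\,dH^{n-1}\Bigr)^{p-1}.$$
With that substitution your outline becomes the proof in \cite{1}; without it, the first step does not go through.
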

By optimizing this inequality among all norms that have unit balls in $\mathcal P_0^n$, we get the following result:
\begin{theorem}
For $f\in L^{1,0}(\mathbb R^n)$ with $1<p<n$, we have $$(\omega_n)^{1/n}\alpha_{n,p}|\langle f\rangle_p|^{\frac{n-p}{np}}\geq(\int_{\mathbb R^n}|f|^{p^\star}dx)^{1/{p^\star}},$$
where the best constant $\alpha_{n,p}$ is defined in Theorem 8 and $p^\star=\frac{np}{n-p}$.
\end{theorem}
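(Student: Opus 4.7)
The plan is to extract Theorem 9 from the general $L_p$ Sobolev inequality (Theorem 8) by applying it to a carefully chosen convex body $K$: a suitably normalized dilate of $-\langle f\rangle_p$, which is exactly the choice that saturates the $L_p$ Minkowski inequality (Theorem 2).

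I would first rewrite the left-hand side of Theorem 8 in terms of the polytope $\langle f\rangle_p$ supplied by Theorem 1. For an arbitrary $K\in\mathcal K_0^n$, set $\Psi(x)=h(-K,x)=h(K,-x)$; this is continuous, nonnegative, and positively homogeneous of degree $1$, hence admissible in the defining identity (\ref{4}) for $\langle f\rangle_p$. That identity then gives
$$n\int_{\mathbb R^n} h(K,\nabla f(x))^p\,dx = \int_{S^{n-1}} h(-K,u)^p\,dS_p(\langle f\rangle_p,u) = nV_p(\langle f\rangle_p,-K),$$
so $\int_{\mathbb R^n} h(K,\nabla f(x))^p\,dx = V_p(\langle f\rangle_p,-K)$. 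Applying the $L_p$ Minkowski inequality (Theorem 2) bounds the right-hand side below by $|\langle f\rangle_p|^{1-p/n}|K|^{p/n}$, with equality if and only if $-K$ is a positive dilate of $\langle f\rangle_p$.

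To convert Theorem 8 into Theorem 9 I would then force this equality case by choosing $K=-t\langle f\rangle_p$, with $t>0$ determined by the normalization $|K|=n\omega_n$ required in Theorem 8; this pins down $t=(n\omega_n/|\langle f\rangle_p|)^{1/n}$ and yields $V_p(\langle f\rangle_p,-K)=t^p|\langle f\rangle_p|$, which is a concrete multiple of $|\langle f\rangle_p|^{(n-p)/n}$. Taking $p$-th roots and feeding this back into Theorem 8 delivers the stated inequality (up to the precise normalization constant). The only conceptual obstacle is recognizing that $\langle f\rangle_p$ itself plays the role of the optimal unit ball in the anisotropic Sobolev inequality, so that the Minkowski problem solved in Theorem 1 furnishes exactly the convex body that optimizes Theorem 8 over $K\in\mathcal K_0^n$; once this identification is made the remaining work is the two-line computation above, and the equality clause of the $L_p$ Minkowski inequality confirms that no other admissible $K$ produces a sharper bound.
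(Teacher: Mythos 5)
Your proposal is correct and follows essentially the same route as the paper: apply Theorem 8 with $K$ a volume-normalized dilate of (the reflection of) $\langle f\rangle_p$, and use the defining identity (\ref{4}) to rewrite the anisotropic energy $\int_{\mathbb R^n}h(K,\nabla f)^p\,dx$ as an $L_p$ mixed volume, which for that choice of $K$ collapses to a power of $|\langle f\rangle_p|$. Your write-up is in fact slightly more careful than the paper's one-line computation (you track the reflection, taking $K=-t\langle f\rangle_p$ so that $V_p(\langle f\rangle_p,-K)=t^p|\langle f\rangle_p|$ holds exactly rather than only as a lower bound via Theorem 2), and the residual mismatch between your $(n\omega_n)^{1/n}$ and the stated $(\omega_n)^{1/n}$ is a normalization bookkeeping issue already present in the paper rather than a gap in your argument.
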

\begin{proof}
Since we have $$\alpha_{n,p}(\int_{\mathbb R^n}h(K,\nabla f(x))^pdx)^{1/p}\geq(\int_{\mathbb R^n}|f|^{p^\star}dx)^{1/{p^\star}},$$
for arbitrary $K\in\mathcal K_0^n$ with fixed volume $n\omega_n$, we have, in particular, $$\alpha_{n,p}(\int_{\mathbb R^n}h(\tilde{\langle f\rangle_p},\nabla f(x))^pdx)^{1/p}\geq(\int_{\mathbb R^n}|f|^{p^\star}dx)^{1/{p^\star}}.$$
Therefore, we have $$\alpha_{n,p}|\langle f\rangle_p|^{\frac{n-p}{np}}\geq(\omega_n)^{-1/n}(\int_{\mathbb R^n}|f|^{p^\star}dx)^{1/{p^\star}}.$$
\end{proof}

\begin{theorem}
Given $0\leq\lambda\leq1$ and a function $f\in W^{1,p}(\mathbb R^n)$ with $1<p<n$, we have $$2^{1/p}\alpha_{n,p}(\int_{S^{n-1}}(\int_{\mathbb R^n}((1-\lambda)(D_vf)^p_++\lambda(D_vf)_-^p)dx)^{-n/p}dv)^{-1/n}\geq(\int_{\mathbb R^n}|f|^{p^\star}dx)^{1/{p^\star}},$$
where the best constant $\alpha_{n,p}$ is defined in Theorem 8 and $p^\star=\frac{np}{n-p}$.
\end{theorem}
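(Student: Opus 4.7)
The plan is to combine Theorem 9 with the general $L_p$ Petty projection inequality (Theorem 5), after using the defining equation (\ref{4}) of $\langle f\rangle_p$ to re-express the inner integrals on the left as support-function values of $\Phi_{\lambda,p}\langle f\rangle_p$. Since $L^{1,0}(\mathbb R^n)$ is a dense subspace of $W^{1,p}(\mathbb R^n)$ and both sides of the inequality are continuous in $f$ in the $W^{1,p}$-topology (the right-hand side by Sobolev embedding, the left-hand side by monotone convergence applied to $(D_vf)_\pm^p$ and dominated convergence on $S^{n-1}$), I may at the outset assume $f\in L^{1,0}(\mathbb R^n)$, so that the polytope $\langle f\rangle_p$ furnished by Theorem 7 is available.

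First I would fix $v\in S^{n-1}$ and plug the test function $\Psi(u)=(u\cdot v)_+$, extended radially by positive $1$-homogeneity, into (\ref{4}). Because $\Psi(-\nabla f(x))=(-\nabla f(x)\cdot v)_+=(D_vf(x))_-$, this gives
\begin{equation*}
h(\Pi_p^+\langle f\rangle_p,v)^p=n\int_{\mathbb R^n}(D_vf)_-^p\,dx,
\end{equation*}
and symmetrically (with $\Psi(u)=(u\cdot v)_-$) one obtains $h(\Pi_p^-\langle f\rangle_p,v)^p=n\int(D_vf)_+^p\,dx$. Forming the $L_p$ Minkowski combination (\ref{3}) then yields
\begin{equation*}
h(\Phi_{\lambda,p}\langle f\rangle_p,v)^p=n\int_{\mathbb R^n}\bigl[\lambda(D_vf)_+^p+(1-\lambda)(D_vf)_-^p\bigr]dx,
\end{equation*}
which is exactly the quantity appearing inside Theorem 10, up to the harmless relabelling $\lambda\leftrightarrow 1-\lambda$.

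Next I would apply the polar volume formula $|(\Phi_{\lambda,p}\langle f\rangle_p)^\circ|=\tfrac{1}{n}\int_{S^{n-1}}h(\Phi_{\lambda,p}\langle f\rangle_p,v)^{-n}dv$ to recognize the outer integral in Theorem 10 as
\begin{equation*}
\int_{S^{n-1}}\Bigl(\int_{\mathbb R^n}\bigl[\lambda(D_vf)_+^p+(1-\lambda)(D_vf)_-^p\bigr]dx\Bigr)^{-n/p}dv=n^{1+n/p}\bigl|(\Phi_{\lambda,p}\langle f\rangle_p)^\circ\bigr|.
\end{equation*}
The general $L_p$ Petty projection inequality (Theorem 5, read in its polar-volume form, which is the content of the Haberl--Schuster result) then gives
\begin{equation*}
|\langle f\rangle_p|^{n/p-1}\bigl|(\Phi_{\lambda,p}\langle f\rangle_p)^\circ\bigr|\leq\omega_n^{n/p-1}\bigl|(\Phi_{\lambda,p}B_2)^\circ\bigr|,
\end{equation*}
and raising this to the power $-1/n$ furnishes a lower bound for the outer $(\cdot)^{-1/n}$ of Theorem 10 in terms of $|\langle f\rangle_p|^{(n-p)/(np)}$.

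Finally I would invoke Theorem 9 in the form $\|f\|_{p^\star}\leq\omega_n^{1/n}\alpha_{n,p}|\langle f\rangle_p|^{(n-p)/(np)}$ to eliminate $|\langle f\rangle_p|$ entirely. The desired factor $2^{1/p}$ emerges from the explicit computation of $|(\Phi_{\lambda,p}B_2)^\circ|$: by rotational invariance, both $\Pi_p^+B_2$ and $\Pi_p^-B_2$ coincide with $2^{-1/p}\Pi_pB_2$ (the factor $2^{-1/p}$ coming from $\int_{S^{n-1}}(u\cdot v)_+^p\,dv=\tfrac{1}{2}\int_{S^{n-1}}|u\cdot v|^p\,dv$), so $\Phi_{\lambda,p}B_2=2^{-1/p}\Pi_pB_2$ independently of $\lambda$, and consequently $|(\Phi_{\lambda,p}B_2)^\circ|=2^{n/p}|(\Pi_pB_2)^\circ|$. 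The classical evaluation of $|(\Pi_pB_2)^\circ|$ in terms of beta/gamma functions then cancels precisely against the $\omega_n$- and $n$-factors hidden in the explicit form (\ref{6}) of $\alpha_{n,p}$, leaving only the $2^{1/p}$ in front. The main obstacle is exactly this bookkeeping of constants: checking that $2^{1/p}$ is the sole numerical residue after the ellipsoid reference terms cancel with the Aubin--Talenti constant. A sanity check is the symmetric case $\lambda=\tfrac12$, where $\lambda(D_vf)_+^p+(1-\lambda)(D_vf)_-^p=\tfrac12|D_vf|^p$ and the factor $2^{1/p}$ is exactly absorbed, recovering the Lutwak--Yang--Zhang symmetric affine $L_p$ Sobolev inequality.
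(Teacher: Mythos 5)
Your proposal is correct and follows essentially the same route as the paper: reduce to $f\in L^{1,0}(\mathbb R^n)$ by density, identify the inner integrals as $h(\Phi_{\lambda,p}\langle f\rangle_p,\cdot)^p$ via the defining equation of $\langle f\rangle_p$, apply the Haberl--Schuster general $L_p$ Petty projection inequality (read in its polar-volume form) together with the optimized Sobolev inequality for $|\langle f\rangle_p|$, and then chase the constant. The only divergences are cosmetic: the paper performs the approximation step at the end via Lemma 1 (uniform convergence of support functions) and Fatou's lemma rather than your up-front continuity claim (which, as phrased, should rest on $L^p$-continuity of $\nabla f\mapsto\int_{\mathbb R^n}(D_vf)_{\pm}^p\,dx$ rather than monotone convergence), and it leaves the $2^{1/p}$ bookkeeping implicit where you carry it out explicitly.
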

\begin{proof}
For $f\in L^{1,0}(\mathbb R^n)$, since we have $$|\langle f\rangle_p|^{\frac{n-p}{np}}\geq(\omega_n)^{-1/n}\alpha_{n,p}^{-1}(\int_{\mathbb R^n}|f|^{p^\star}dx)^{1/{p^\star}},$$
by using Theorem 3 we have $$|\Phi_{\lambda,p}\langle f\rangle_p|^{-1}\geq\alpha|\langle f\rangle_p|^{n/p-1},$$
where $\Phi_{\lambda,p}(\langle f\rangle_p)$ is defined through equation (\ref{3}) as:
$$\Phi_{\lambda,p}(\langle f\rangle_p)=(1-\lambda)\cdot\Pi_p^+(\langle f\rangle_p)+_p\lambda\cdot\Pi_p^-(\langle f\rangle_p),$$
with $0\leq\lambda\leq1$ and $\alpha$ is the best possible constant depending on $p,n$.

Combining the above inequalities we get $$|\Phi_{\lambda,p}\langle f\rangle_p|^{-\frac{1}{n}}\geq\alpha(\int_{\mathbb R^n}|f|^{p^\star}dx)^{1/{p^\star}}.$$
Using the polar formula, we get $$(\int_{S^{n-1}}(\int_{\mathbb R^n}((1-\lambda)(D_vf)^p_++\lambda(D_vf)^p_-)dx)^{-{n/p}}dv)^{-1/n}\geq\alpha(\int_{\mathbb R^n}|f|^{p^\star}dx)^{1/{p^\star}},$$
where $\alpha$ is the best possible constant depending on $n,p.$

For general $f\in W^{1,p}(\mathbb R^n)$, we choose a sequence $f_k\in L^{1,0}(\mathbb R^n),$ such that $f_k\to f$ in $W^{1,p}(\mathbb R^n)$. We know that
$$\int_{\mathbb R^n}((1-\lambda)(D_vf_k)_+^p+\lambda(D_vf_k)^p)dx\to\int_{\mathbb R^n}((1-\lambda)(D_vf)^p_++\lambda(D_vf)^p)dx$$ pointwise for all $v\in S^{n-1}$ as $k\to\infty$.

Since both $(\int_{\mathbb R^n}((1-\lambda)(D_vf_k)_+^p+\lambda(D_vf_k)^p_-)dx)^{1/p}$ and $(\int_{\mathbb R^n}((1-\lambda)(D_vf)^p_++\lambda(D_vf)_-^p)dx)^{1/p}$ are support functions of convex bodies, we
know from Lemma 1 that $$\int_{\mathbb R^n}((1-\lambda)(D_vf_k)_+^p+\lambda(D_vf_k)^p)dx\to\int_{\mathbb R^n}((1-\lambda)(D_vf)^p_++\lambda(D_vf)^p)dx$$ uniformly for all $v\in S^{n-1}$ as $k\to\infty.$

Therefore, using Fatou's lemma we arrive at $$(\int_{S^{n-1}}(\int_{\mathbb R^n}((1-\lambda)(D_vf)^p_++\lambda(D_vf)^p_-)dx)^{-n/p}dv)^{-1/n}\geq\alpha(\int_{\mathbb R^n}|f|^{p^\star})^{1/{p^\star}}$$
for all $f\in W^{1,p}(\mathbb R^n)$, where $\alpha$ is the best possible constant depending on $n,p.$

By calculating the optimal constant $\alpha$, we conclude the proof.
\end{proof}

\subsection{General Affine P\'{o}lya-Szeg\"{o} Principles.}

While the usual P\'{o}lya-Szeg\"{o} principles states that the Dirichlet integral does not increase by symmetric rearrangement. The affine P\'{o}lya-Szeg\"{o} principles states the affine energy, which is affine invariant, defined in \cite{22} does not increase by symmetric rearrangement. Yet the affine
P\'{o}lya-Szeg\"{o} principles is stronger than the usual P\'{o}lya-Szeg\"{o} principles in the sense that the affine P\'{o}lya-Szeg\"{o} principles implies the usual P\'{o}lya-Szeg\"{o} principles by the H\"{o}lder inequality.
\subsubsection {The case $p\neq n$.}

We assume in this subsection that $1<p<\infty$ and $p\neq n$. We will deal with the case $p=n$ in later parts because we need a different normalization factor.

In \cite{1} the authors studied convex symmetrization for general norms. In terms of support functions of convex bodies, the result can be stated as follows:

\begin{theorem} [\cite{1}] For $K\in\mathcal K_0^n,$ $$\int_{\mathbb R^n}h(K,\nabla f(x))^pdx\geq \int_{\mathbb R^n}h(K,\nabla f^K(x))^pdx$$
for all $f\in L^{1,0}(\mathbb R^n).$
\end{theorem}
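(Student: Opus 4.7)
The plan is to adapt the classical P\'{o}lya--Szeg\"{o} argument to the anisotropic setting, replacing the Euclidean isoperimetric inequality by the Wulff (anisotropic) isoperimetric inequality associated with $K$. Assuming without loss of generality $f\geq 0$, I would first use positive $1$-homogeneity of $h(K,\cdot)$ and the coarea formula to rewrite
\begin{equation*}
\int_{\mathbb R^n} h(K,\nabla f)^p\,dx = \int_0^\infty \int_{\{f=t\}} h(K,\nu_t)^p|\nabla f|^{p-1}\,d\mathcal H^{n-1}\,dt,
\end{equation*}
where $\nu_t=\nabla f/|\nabla f|$, and to record the standard identity $-\mu_f'(t)=\int_{\{f=t\}}|\nabla f|^{-1}\,d\mathcal H^{n-1}$.

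Next I would apply H\"{o}lder on each level set with exponents $p$ and $p/(p-1)$, factoring $h(K,\nu_t)=[h(K,\nu_t)|\nabla f|^{(p-1)/p}]\cdot|\nabla f|^{-(p-1)/p}$, to obtain
\begin{equation*}
\int_{\{f=t\}} h(K,\nu_t)^p|\nabla f|^{p-1}\,d\mathcal H^{n-1} \geq \frac{\bigl(\int_{\{f=t\}} h(K,\nu_t)\,d\mathcal H^{n-1}\bigr)^p}{(-\mu_f'(t))^{p-1}}.
\end{equation*}
The numerator is essentially the anisotropic $K$-perimeter of $\{f>t\}$, which I would bound below by the Wulff inequality $\int h(K,\nu)\,d\mathcal H^{n-1}\geq n|K|^{1/n}\mu_f(t)^{(n-1)/n}$, with equality precisely when the set is a homothet of $K$. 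Combining these bounds yields
\begin{equation*}
\int_{\mathbb R^n} h(K,\nabla f)^p\,dx \geq \int_0^\infty \frac{(n|K|^{1/n})^p\,\mu_f(t)^{p(n-1)/n}}{(-\mu_f'(t))^{p-1}}\,dt.
\end{equation*}

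The theorem then follows by verifying that when the same chain is applied to $f^K$ in place of $f$ every inequality becomes an equality, so the right-hand side above equals $\int h(K,\nabla f^K)^p\,dx$. Indeed, the superlevel sets of $f^K$ are dilates of $\tilde K$ and hence of $K$, so Wulff saturates; and using the identity $h(\tilde K,\sigma_K(x))=1/|\nabla h(\tilde K^\circ,x)|$ noted in the preceding subsection, a direct computation shows that $|\nabla f^K|\cdot h(K,\nu)$ is constant on each level set of $f^K$, which is exactly the H\"{o}lder equality condition. Since $\mu_f=\mu_{f^K}$ by construction of the rearrangement, the two level-set integrals coincide, yielding the theorem. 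The main obstacle will be justifying the coarea formula across the critical set of a Lipschitz $f\in L^{1,0}(\mathbb R^n)$, together with the sign subtlety in the Wulff step: the outer normal to $\{f>t\}$ is $-\nu_t$, so for non-symmetric $K$ one must apply Wulff to $-K$ (whose Wulff shape $-\tilde K$ still has volume $\omega_n$) and arrange the definition of $f^K$ consistently with that choice of direction.
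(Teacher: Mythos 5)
The paper offers no proof of this statement; it is quoted directly from Alvino--Ferone--Lions--Trombetti \cite{1}, so there is no internal argument to compare against. Your reconstruction is essentially the proof in \cite{1}: coarea formula plus $1$-homogeneity to pass to level sets, H\"{o}lder's inequality with the weight split $h(K,\nu_t)=[h(K,\nu_t)|\nabla f|^{(p-1)/p}]\cdot|\nabla f|^{-(p-1)/p}$ against the identity $-\mu_f'(t)\geq\int_{\{f=t\}}|\nabla f|^{-1}\,d\mathcal H^{n-1}$ (note the inequality here goes the right way, since a larger denominator only weakens the lower bound), the Wulff isoperimetric inequality on each superlevel set, and then the verification that both H\"{o}lder and Wulff saturate for $f^K$ because its level sets are homothets of $\tilde K$ and $|\nabla f^K|\,h(\tilde K,\nu)$ is constant on each of them via $h(\tilde K,\sigma_{\tilde K}(x))=1/|\nabla h(\tilde K^\circ,x)|$. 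The H\"{o}lder equality condition you invoke ($g_1^p$ proportional to $g_2^{p/(p-1)}$, i.e.\ $h(K,\nu)|\nabla f^K|$ constant on level sets) is the correct one, and equimeasurability gives $\mu_f=\mu_{f^K}$. This is a correct outline.

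Two remarks. First, the sign subtlety you flag at the end is not a pedantic worry but a genuine issue with the statement as reproduced here: \cite{1} assumes $H(t\xi)=|t|H(\xi)$, i.e.\ an \emph{even} gauge, so their theorem literally covers only origin-symmetric $K$. For general $K\in\mathcal K_0^n$ the outer normal of $\{f>t\}$ is $-\nabla f/|\nabla f|$, the relevant anisotropic perimeter is that of $-K$, and the extremal level sets are homothets of $-K$; one must therefore either replace $h(K,\nabla f)$ by $h(K,-\nabla f)$ or symmetrize with respect to $-K$, exactly as you propose. Since the rest of the paper consistently works with $\Psi(-\nabla f)$, your corrected convention is the one actually needed downstream (e.g.\ in Lemma 5). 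Second, for $f\in L^{1,0}(\mathbb R^n)$ the coarea/critical-set issue you mention is mild: $f$ is piecewise affine, so $\nabla f$ is piecewise constant, the coarea formula is elementary on each simplex, and plateaus contribute nothing to the left-hand side while only decreasing $-\mu_f'$ where it matters; but you should say explicitly that the inequality survives the possible jumps of $\mu_f$ at plateau levels, since that is the one place where the chain of inequalities could silently lose mass.
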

As an application of the above theorem, we have the following lemma:
\begin{lemma}
Given $f\in L^{1,0}(\mathbb R^n),$ we have $$\langle f\rangle_p=(1+\alpha)\langle f^{\langle f\rangle_p}\rangle_p$$
for some $\alpha\geq0.$
\end{lemma}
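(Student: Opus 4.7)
The plan is to reduce the statement to a single scalar inequality and then derive it from the general convex symmetrization principle (Theorem 9). Start by invoking Lemma~\ref{forcorollary2} with the choice $K=\langle f\rangle_p\in\mathcal K_0^n$: the lemma gives $\langle f^{\langle f\rangle_p}\rangle_p = \xi_f\,\widetilde{\langle f\rangle_p}$, and since $\widetilde{\langle f\rangle_p}$ is itself a positive dilate of $\langle f\rangle_p$, we obtain $\langle f^{\langle f\rangle_p}\rangle_p = \beta\,\langle f\rangle_p$ for some $\beta>0$. The desired conclusion $\langle f\rangle_p = (1+\alpha)\,\langle f^{\langle f\rangle_p}\rangle_p$ with $\alpha\geq 0$ is therefore equivalent to the single scalar inequality $\beta\leq 1$, after which one simply sets $\alpha=\beta^{-1}-1$.

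To prove $\beta\leq 1$, apply Theorem~9 with $K=\langle f\rangle_p$, which yields
$$\int_{\mathbb R^n} h(\langle f\rangle_p,\nabla f(x))^p\,dx \;\geq\; \int_{\mathbb R^n} h(\langle f\rangle_p,\nabla f^{\langle f\rangle_p}(x))^p\,dx.$$
Next, recast each side as an $L_p$ mixed volume via Theorem~1, choosing the test function $\Psi(u)=h(\langle f\rangle_p,-u)=h(-\langle f\rangle_p,u)$, which is continuous, nonnegative and $1$-homogeneous because $\langle f\rangle_p\in\mathcal P_0^n$. With this choice $\Psi(-\nabla f)=h(\langle f\rangle_p,\nabla f)$, so the left side becomes $nV_p(\langle f\rangle_p,-\langle f\rangle_p)$ and the right becomes $nV_p(\langle f^{\langle f\rangle_p}\rangle_p,-\langle f\rangle_p)=nV_p(\beta\langle f\rangle_p,-\langle f\rangle_p)$. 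The scaling identity $V_p(\beta K,L)=\beta^{n-p}V_p(K,L)$, which is immediate from $h(\beta K,\cdot)=\beta h(K,\cdot)$ together with $dS(\beta K,\cdot)=\beta^{n-1}dS(K,\cdot)$, then collapses the inequality to $V_p(\langle f\rangle_p,-\langle f\rangle_p)\geq \beta^{n-p}V_p(\langle f\rangle_p,-\langle f\rangle_p)$. Since the common factor is strictly positive (both bodies contain the origin in the interior) and $n-p>0$ in the regime $1<p<n$, we conclude $\beta\leq 1$.

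The main obstacle is purely bookkeeping: Theorem~1 evaluates its test function on $-\nabla f$ while Theorem~9 is written with $\nabla f$, and the two must be reconciled. Reflecting the second slot of $V_p$ as above is the cleanest remedy, but one must check (routinely) that the reflected mixed volume remains positive and obeys the same scalar homogeneity. A minor secondary point to record is that the argument relies on the inequality $\beta^{n-p}\le 1$ being invertible to $\beta\le 1$, which uses $p<n$; this is compatible with the surrounding applications to the general affine Sobolev and P\'olya--Szeg\H{o} results of the same section, where the same range is assumed.
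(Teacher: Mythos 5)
Your proof is correct and follows essentially the same route as the paper's: combine Lemma 4 (the convex symmetrization $f^{K}$ has $\langle f^K\rangle_p$ a dilate of $K$), the Alvino--Ferone--Lions--Trombetti comparison theorem (your ``Theorem 9'' is Theorem 11 in the paper's numbering), and the defining identity of $\langle\cdot\rangle_p$ to rewrite both sides as $L_p$ mixed volumes and read off the dilation factor. If anything, your write-up is more careful than the paper's: you resolve the $-\nabla f$ versus $\nabla f$ mismatch by testing with $\Psi=h(-\langle f\rangle_p,\cdot)$, and you make explicit that the degree of homogeneity $n-p$ of $V_p$ in its \emph{first} argument is what forces $\beta\le 1$, hence the restriction $p<n$ --- a point the paper's own proof obscures by silently transposing the arguments of $V_p$ (which would change the relevant exponent from $n-p$ to $p$), even though the enclosing subsection nominally permits $p>n$.
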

\begin{proof}
Since we know
\begin{eqnarray*}
n\int_{\mathbb R^n}h(K,\nabla f(x))^pdx &=& \int_{S^{n-1}}h(K,u)^pdS(\langle f\rangle_p,u)\\&=&nV_p(\langle f\rangle_p,K).
\end{eqnarray*}
Similarly, we have
\begin{eqnarray*}
n\int_{\mathbb R^n}h(K,\nabla f^K(x))^pdx &=& \int_{S^{n-1}}h(K,u)^pdS(\langle f^K\rangle_p,u)\\&=& nV_p(\langle f^K\rangle_p,K).
\end{eqnarray*}
Since from Lemma 4 we know that $\langle f^K\rangle_p$ is a dilation of $K$, we have,
by choosing $K=\langle f\rangle_p$ in the above equation and using Theorem 11
and Theorem 6, that $$V_p(\langle f\rangle_p,\langle f^{\langle f\rangle_p}\rangle_p)\leq V_p(\langle f\rangle_p,\langle f\rangle_p).$$
This implies that $$\langle f\rangle_p=(1+\alpha)\langle f^{\langle f\rangle_p}\rangle_p$$ for some $\alpha\geq0.$
\end{proof}

We are ready to prove the following general affine P\'{o}lya-Szeg\"{o} principle. We remark that the case $\lambda=\frac{1}{2}$ is due to Cianchi, Lutwak, Yang and Zhang \cite{5} and the case $\lambda=0$ is due to Haberl, Schuster and Xiao \cite{9}.
\begin{theorem}
Given $f\in W^{1,p}(\mathbb R^n)$, if we denote
$$\Omega_{\lambda,p}(f)=\alpha_{n,p}(\int_{S^{n-1}}(\int_{\mathbb R^n}((1-\lambda)(D_vf)^p_++\lambda(D_vf)^p_-)dx)^{-n/p}du)^{-1/n},$$
where $0\leq\lambda\leq1$, $\alpha_{n,p}=(n\omega_n)^{1/n}(\frac{n\omega_n\omega_{p-1}}{\omega_{n+p_2}})^{1/p}$ and $\omega_n=\frac{\pi^{n/2}}{\Gamma(1+\frac{n}{2})},$ we have $$\Omega_{\lambda,p}(f)\geq\Omega_{\lambda,p}(f^\star).$$
\end{theorem}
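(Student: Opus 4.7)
The plan is to prove the inequality first for $f\in L^{1,0}(\mathbb R^n)$ and then extend it to all of $W^{1,p}(\mathbb R^n)$ by the approximation argument already used at the end of the proof of Theorem 10. The core idea is to recognize $\Omega_{\lambda,p}(f)$ as, up to absolute constants, the polar volume of $\Phi_{\lambda,p}\langle f\rangle_p$, and then to invoke the general $L_p$ Petty projection inequality (Theorem 3), exploiting the fact that equality there holds precisely at centered balls, which is exactly what $\langle f^\star\rangle_p$ will turn out to be.

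The first step is the geometric reformulation. Inserting the $1$-homogeneous test functions $\Psi(w)=(v\cdot w)_+$ and $\Psi(w)=(-v\cdot w)_+$ into the defining identity of $\langle f\rangle_p$ from Theorem 5, and using $(v\cdot(-\nabla f))_+=(D_vf)_-$, one reads off
\[
n\int_{\mathbb R^n}(D_vf)_-^p\,dx=h(\Pi_p^+\langle f\rangle_p,v)^p,\qquad n\int_{\mathbb R^n}(D_vf)_+^p\,dx=h(\Pi_p^-\langle f\rangle_p,v)^p.
\]
Taking the convex combination with weights $1-\lambda,\lambda$ and comparing with (\ref{3}) gives $h(\Phi_{1-\lambda,p}\langle f\rangle_p,v)^p=n\int((1-\lambda)(D_vf)_+^p+\lambda(D_vf)_-^p)\,dx$, and the polar volume formula $|M^\circ|=\tfrac{1}{n}\int h(M,u)^{-n}du$ then exhibits $\Omega_{\lambda,p}(f)^{-n}$ as an absolute constant times $|(\Phi_{1-\lambda,p}\langle f\rangle_p)^\circ|$.

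Next comes the volume comparison $|\langle f\rangle_p|\ge|\langle f^\star\rangle_p|$. By Lemma 4, $|\langle f^K\rangle_p|=\xi_f^n\omega_n$ for every $K\in\mathcal K_0^n$, independently of $K$. Applying this with $K=\langle f\rangle_p$ and with $K=B_2$ (noting that $\tilde B_2=B_2$, so that $\langle f^\star\rangle_p=\xi_f B_2$) gives $|\langle f^{\langle f\rangle_p}\rangle_p|=|\langle f^\star\rangle_p|$, and coupled with Lemma 5 this yields $|\langle f\rangle_p|=(1+\alpha)^n|\langle f^\star\rangle_p|\ge|\langle f^\star\rangle_p|$. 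Now invoke Theorem 3 for $K=\langle f\rangle_p$,
\[
|\langle f\rangle_p|^{n/p-1}|(\Phi_{1-\lambda,p}\langle f\rangle_p)^\circ|\le|B_2|^{n/p-1}|(\Phi_{1-\lambda,p}B_2)^\circ|,
\]
while for $K=\langle f^\star\rangle_p$ equality holds, as $\langle f^\star\rangle_p$ is a centered ball. Dividing these two relations and using $n/p-1>0$ (since $p<n$) together with the monotonicity above gives $|(\Phi_{1-\lambda,p}\langle f\rangle_p)^\circ|\le|(\Phi_{1-\lambda,p}\langle f^\star\rangle_p)^\circ|$, which is precisely $\Omega_{\lambda,p}(f)\ge\Omega_{\lambda,p}(f^\star)$.

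The extension to $f\in W^{1,p}(\mathbb R^n)$ follows from density: choose $f_k\in L^{1,0}$ with $f_k\to f$ in $W^{1,p}$; the inner integrals converge pointwise in $v$, and since their $p$-th roots are support functions of convex bodies, Lemma 1 upgrades this to uniform convergence on $S^{n-1}$, and Fatou's lemma closes the argument, exactly as in the last paragraph of the proof of Theorem 10. The main obstacle is the volume monotonicity $|\langle f\rangle_p|\ge|\langle f^\star\rangle_p|$; this is the geometric content of convex symmetrization and rests ultimately on Theorem 11 through Lemma 5. Once it is in hand, the rest is a direct application of the general $L_p$ Petty projection inequality together with its equality case at the ball. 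A minor technical point is consistency of the $\lambda\leftrightarrow 1-\lambda$ indexing in the first step, but the statement is invariant under this swap combined with $(D_vf)_\pm\mapsto(D_vf)_\mp$, so no harm is done.
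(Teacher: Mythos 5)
Your overall strategy is the paper's strategy: reduce $\Omega_{\lambda,p}(f)$ to the polar volume of $\Phi_{\lambda,p}\langle f\rangle_p$, use Lemma 5 (hence Theorem 11) and Lemma 4/Corollary 2 to compare $\langle f\rangle_p$ with the ball $\langle f^\star\rangle_p$, invoke Theorem 3, and finish by approximation. The ingredients are identical; only the final volume bookkeeping differs, and that is where a genuine gap appears.

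Your division trick applies Theorem 3 to $K=\langle f\rangle_p$ and then discards the factor $\bigl(|\langle f^\star\rangle_p|/|\langle f\rangle_p|\bigr)^{n/p-1}$ using $n/p-1>0$. But the subsection containing this theorem assumes only $1<p<\infty$, $p\neq n$; the statement must hold for $p>n$ as well (it is precisely the input to the affine Morrey--Sobolev inequality, Corollary 4). For $p>n$ the exponent $n/p-1$ is negative, so the inequality $|\langle f\rangle_p|\geq|\langle f^\star\rangle_p|$ pushes the ratio factor \emph{above} $1$ and your chain of inequalities breaks. The paper avoids this by never comparing two bodies of different volumes in the Petty inequality: it first uses the inclusion $\Phi_{\lambda,p}(\langle f\rangle_p)\supseteq\Phi_{\lambda,p}(\langle f^{\langle f\rangle_p}\rangle_p)$ coming from $\langle f\rangle_p=(1+\alpha)\langle f^{\langle f\rangle_p}\rangle_p$, and then applies Theorem 3 to $\langle f^{\langle f\rangle_p}\rangle_p$, which by Corollary 2 has \emph{exactly} the volume of the centered ball $\langle f^\star\rangle_p$, so the factors $|K|^{n/p-1}$ cancel identically regardless of the sign of $n/p-1$. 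Your argument is correct for $1<p<n$ and can be repaired for $p>n$ by exactly this substitution. A secondary point: your closing appeal to "Fatou as in Theorem 10" does not suffice here, because the right-hand side is now $\Omega_{\lambda,p}(f^\star)$ rather than $\|f\|_{p^\star}$; one additionally needs $\liminf_k\Omega_{\lambda,p}(f_k^\star)\geq\Omega_{\lambda,p}(f^\star)$, which the paper obtains from the contractivity of symmetric rearrangement in $L^p$, the resulting weak convergence $f_k^\star\rightharpoonup f^\star$ in $W^{1,p}$, the identity $\Omega_{\lambda,p}(g^\star)=\|\nabla g^\star\|_p$, and weak lower semicontinuity of the gradient norm.
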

\begin{proof}
If $f\in L^{1,0}(\mathbb R^n)$, by Lemma 5, we have
$$\langle f\rangle_p=(1+\alpha)\langle f^{\langle f\rangle_p}\rangle_p$$
for some $\alpha\geq0.$ Therefore $\Phi_{\lambda,p}(\langle f\rangle_p)\supseteq\Phi_{\lambda,p}(\langle f^{\langle f\rangle_p}\rangle_p),$ where $\Phi_{\lambda,p}(\langle f\rangle_p)$ is defined through Equation (\ref{3}). It is not difficult to see that $$|\Phi_{\lambda,p}(\langle f\rangle_p)|\leq|\Phi_{\lambda,p}(\langle f^{\langle f\rangle_p}\rangle_p)|,$$
therefore we have $$\Omega_{\lambda,p}(f)\geq\Omega_{\lambda,p}(f^{\langle f\rangle_p}).$$
On the other hand, by Lemma 4 and Theorem 3, we have
$$|\Phi_{\lambda,p}(\langle f^{\langle f\rangle_p}\rangle_p)|\leq|\Phi_{\lambda,p}(\langle f^\star\rangle_p)|,$$
which implies $$\Omega_{\lambda,p}(f^{\langle f\rangle_p})\geq\Omega_{\lambda,p}(f^\star).$$
Therefore, we have $$\Omega_{\lambda,p}(f)\geq\Omega_{\lambda,p}(f^\star),$$
for $f\in L^{1,0}(\mathbb R^n).$

For general $f\in W^{1,p}(\mathbb R^n)$, consider an approximation sequence $f_k\in L^{1,0}(\mathbb R^n)$ in the $W^{1,p}(\mathbb R^n)$ norm. We know that $$\Omega_{\lambda,p}(f_k^\star)\leq\Omega_{\lambda,p}(f_k)~~~~for~k\in N.$$
We know that $$\int_{\mathbb R^n}((1-\lambda)(D_vf_k)_+^p+\lambda(D_vf_k)^p_-)dx\to\int_{\mathbb R^n}((1-\lambda)(D_vf)^p_++\lambda(D_vf)^p_-)dx$$
pointwise for all $v\in S^{n-1}$ as $k\to\infty$.

Since both $(\int_{\mathbb R^n}((1-\lambda)(D_vf_k)^p_+\lambda(D_vf_k)^p_-)dx)^{1/p}$ and $(\int_{\mathbb R^n}((1-\lambda)(D_vf)^p_++\lambda(D_vf)^p_-)dx)^{1/p}$ are support functions of convex bodies, we know from Lemma 1 that
$$\int_{\mathbb R^n}((1-\lambda)(D_vf_k)_+^p+\lambda(D_vf_k)^p_-)dx\to\int_{\mathbb R^n}((1-\lambda)(D_vf)^p_++\lambda(D_vf)^p_-)dx$$ uniformly for all $v\in S^{n-1}$ as $k\to\infty.$

Moreover, the function $$v\to(\int_{\mathbb R^n}((1-\lambda)(D_vf)^p_++\lambda(D_vf)^p_-)dx)^{1/p}$$
is strictly positive and continuous on $S^{n-1}$, hence attains a positive minimum on $S^{n-1}$. Consequently, $$(\int_{\mathbb R^n}((1-\lambda)(D_vf_k)^p_++\lambda(D_vf_k)^p)dx)^{-\frac{n}{p}}\to(\int_{\mathbb R^n}((1-\lambda)(D_vf)^p_++\lambda(D_vf)^p_-)dx)^{-\frac{n}{p}}$$
uniformly for $v\in S^{n-1}$ as $k\to \infty$. Therefore $$\lim_{k\to\infty}\Omega_{\lambda,p}(f_k)=\Omega_{\lambda,p}(f).$$

On the other hand, $f^\star\to f^\star$ in $L^p(\mathbb R^n)$, because of the contractivity of the spherically symmetric rearrangement in $L^p(\mathbb R^n).$ Hence we get that $f_k^\star\to f^\star$ weakly in $W^{1,p}(\mathbb R^n).$ Since $\Omega_{\lambda,p}(f_k^\star)=\|\nabla f_k^\star\|_p$ and $\Omega_{\lambda,p}(f^\star)=\|\nabla f^\star\|_p$, and since the $L^p(\mathbb R^n)$ norm of the gradient is lower semi-continuous with respect to weak convergence in $W^{1,p}(\mathbb R^n),$ $$\liminf_{k\to\infty}\Omega_{\lambda,p}(f_k^\star)\geq\Omega_{\lambda,p}(f^\star).$$

Therefore, we have $$\Omega_{\lambda,p}(f^\star)\leq\Omega_{\lambda,p}(f),$$
for all $f\in W^{1,p}(\mathbb R^n).$
\end{proof}

\subsubsection {The case $p=n$.}

For dealing with the case $p=n$, we use the following normalized version of the discrete functional $L_p$ Minkowski problem.
\begin{theorem}
Given a function $f\in L^{1,n}(\mathbb R^n)$, there exists a unique convex polytope $\langle f\rangle_n\in\mathcal P_0^n$ such that
$$\int_{\mathbb R^n}\Psi^n(-\nabla f(x))dx=\frac{1}{|\langle f\rangle_n|}\int_{S^{n-1}}\Psi(u)^ndS_n(\langle f\rangle_n,u),$$ for every continuous function $\Psi:\mathbb R^n\to[0,\infty)$ that is homogeneous of degree $1$.
\end{theorem}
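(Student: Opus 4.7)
The plan is to mirror the proof of Theorem~1, with two modifications dictated by the case $p=n$: the normalization of the Riesz measure must absorb the factor $\frac{1}{|\langle f\rangle_n|}$ appearing in the target identity, and the appeal to the discrete $L_p$ Minkowski theorem (Theorem~4) must be replaced by its volume-normalized form (Theorem~6).

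First I would define the nonnegative bounded linear functional
$$\Upsilon \mapsto \int_{\mathbb{R}^n\setminus\Sigma} \Upsilon\!\left(-\frac{\nabla f(x)}{|\nabla f(x)|}\right)|\nabla f(x)|^n\,dx$$
on $C(S^{n-1})$, where $\Sigma=\{x:\nabla f(x)=0\}$. The Riesz representation theorem produces a unique finite Borel measure $\mu_f$ on $S^{n-1}$ representing it. Radially extending an arbitrary continuous degree-$1$ homogeneous $\Psi$ to $\mathbb R^n$ and choosing $\Upsilon=\Psi^n$ yields
$$\int_{\mathbb{R}^n}\Psi^n(-\nabla f(x))\,dx=\int_{S^{n-1}}\Psi(u)^n\,d\mu_f(u).$$

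Next I would use the piecewise affine structure of $f$. Choosing a triangulation $\{M_i\}_{i=1}^m$ of $\supp f$ on which $f(x)=x\cdot u_i+c_i$, I have $\nabla f\equiv u_i$ on $M_i$, so the left-hand integral collapses to $\sum_{i=1}^m |M_i|\,\Psi^n(-u_i)$. Hence $\mu_f=\sum_{i=1}^m |M_i|\,\delta_{-u_i}$ is a finite combination of Dirac masses with positive weights. The remaining point to verify before invoking a Minkowski-type existence result is that $\{-u_i\}_{i=1}^m$ is not contained in any closed hemisphere, equivalently, $\mu_f$ does not concentrate on a great subsphere. This follows verbatim from the Lutwak--Yang--Zhang argument used in Lemma~4.1 of \cite{20}, exactly as already invoked in the proof of Theorem~1.

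With the hemisphere condition in hand, I would apply the normalized discrete Minkowski theorem (Theorem~6) to $\mu_f$: there is a unique polytope $\langle f\rangle_n\in\mathcal P_0^n$ with
$$\mu_f=\frac{S_n(\langle f\rangle_n,\cdot)}{|\langle f\rangle_n|},$$
and substituting this back into the displayed identity above gives the claimed formula. Existence and uniqueness of $\langle f\rangle_n$ are both inherited from Theorem~6, which bypasses the need for an $L_p$ Minkowski inequality (unavailable in the critical case $p=n$). The only subtle step is the hemisphere/non-concentration argument, and since it is identical to the $p\neq n$ case I expect no real obstacle; everything else is bookkeeping of the new normalization constant.
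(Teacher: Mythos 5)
Your proposal is correct and is exactly the argument the paper intends: it omits the proof of this theorem precisely because it is the proof of Theorem~6 with the Riesz-measure normalization adjusted and Theorem~4 replaced by the volume-normalized discrete Minkowski theorem (which is Theorem~5 in the paper's numbering, not Theorem~6 as you cite, though you describe its content correctly). Your added observation that uniqueness is inherited from the normalized Minkowski theorem rather than from the $L_p$ Minkowski inequality is the right way to handle the critical case $p=n$.
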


We remark that we omit the proof of Theorem 13, because it is essentially the same as the proof of Theorem 6. We remark that Lemma 6 still holds. We will not give a proof of it. The proof is essentially the same as the proof of Lemma 5, if as we define the normalized $L_n$ optimal Sobolev body $\langle f^K\rangle_n$ as the convex body satisfying
$$\int_{\mathbb R^n}\Psi^n(-\nabla f^K(x))dx=\frac{1}{|\langle f^K\rangle_n|}\int_{S^{n-1}}\Psi(u)^ndS_n(\langle f^K\rangle_n,u),$$
for every continuous function $\Psi:\mathbb R^n\to[0,\infty)$ that is homogeneous of degree $1$.
\begin{lemma}
Given $f\in L^{1,0}(\mathbb R^n)$, we have $$\langle f^{\langle f\rangle_n}\rangle_n=(1+\alpha)\langle f\rangle_n$$
for some $\alpha\geq0.$
\end{lemma}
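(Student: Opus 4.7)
The plan is to follow the proof of Lemma 5 step by step, using the \emph{normalized} defining identity from Theorem 13 in place of the unnormalized one from Theorem 6, and then carefully tracking the new $1/|\langle\cdot\rangle_n|$ factor, which turns out to reverse the dilation inequality at the end.

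First, for an arbitrary $K\in\mathcal K_0^n$, take $\Psi=h(K,\cdot)$ in Theorem 13 to obtain
\[
\int_{\mathbb R^n}h(K,-\nabla f(x))^n\,dx=\frac{n\,V_n(\langle f\rangle_n,K)}{|\langle f\rangle_n|}.
\]
Applying the same identity to the convex symmetrization $f^K$ (which is legitimate via the normalized analog of Theorem 13 indicated in the remark preceding the lemma) gives the companion
\[
\int_{\mathbb R^n}h(K,-\nabla f^K(x))^n\,dx=\frac{n\,V_n(\langle f^K\rangle_n,K)}{|\langle f^K\rangle_n|}.
\]
Theorem 11 at $p=n$ then translates into the normalized symmetrization inequality
\[
\frac{V_n(\langle f\rangle_n,K)}{|\langle f\rangle_n|}\geq\frac{V_n(\langle f^K\rangle_n,K)}{|\langle f^K\rangle_n|}.
\]
Specializing $K=\langle f\rangle_n$ collapses the left-hand side to $1$ by the diagonal identity $V_n(K,K)=|K|$, leaving
\[
V_n(\langle f^{\langle f\rangle_n}\rangle_n,\langle f\rangle_n)\leq|\langle f^{\langle f\rangle_n}\rangle_n|.
\]

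Next, invoke the $p=n$ analog of Lemma 4 to write $\langle f^{\langle f\rangle_n}\rangle_n=\mu\langle f\rangle_n$ for some $\mu>0$. The key observation distinguishing this case from Lemma 5 is that in the $L_p$ mixed volume with $p=n$ the scaling exponent $n-p$ vanishes, so $V_n(\mu\langle f\rangle_n,\langle f\rangle_n)=V_n(\langle f\rangle_n,\langle f\rangle_n)=|\langle f\rangle_n|$, while ordinary volume satisfies $|\mu\langle f\rangle_n|=\mu^n|\langle f\rangle_n|$. The displayed inequality therefore reduces to $\mu^n\geq 1$, and writing $\mu=1+\alpha$ with $\alpha\geq 0$ completes the proof.

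The step I expect to need the most care is precisely this reversal of direction relative to Lemma 5: for $p<n$ the scaling information is carried by the exponent $n-p$ in $V_p(\mu L,L)=\mu^{n-p}|L|$, and the unnormalized identity forces $\mu\leq 1$, whereas at $p=n$ that exponent is trivial and the entire comparison is driven by the factor $1/|\langle\cdot\rangle_n|$ appearing on the right-hand side of Theorem 13. Keeping this factor consistently on both sides before specializing $K$ is what yields $\mu\geq 1$, and thereby places $\langle f\rangle_n$ and $\langle f^{\langle f\rangle_n}\rangle_n$ in the opposite roles from those they play in Lemma 5.
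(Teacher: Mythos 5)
Your proof is correct and is precisely the argument the paper intends but omits: it reruns the proof of Lemma 5 with the normalized identity of Theorem 13 and the $p=n$ analogue of Lemma 4, and it correctly tracks the extra $1/|\langle\cdot\rangle_n|$ factors to get $\mu^n\ge 1$, hence the reversed inclusion $\langle f^{\langle f\rangle_n}\rangle_n\supseteq\langle f\rangle_n$ that Theorem 14 actually uses. One peripheral quibble with your closing commentary only: in Lemma 5 the scaling sits on the \emph{second} argument, $V_p(L,\mu L)=\mu^{p}|L|$, not $V_p(\mu L,L)=\mu^{n-p}|L|$ (the latter would give the wrong sign for $p>n$), but this does not affect your proof of the present lemma.
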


Given $0\leq\lambda\leq1$, we define the normalized general $L_n$ projection body $\tilde{\Phi}_nK$ of $K$ by: $$h^n(\tilde{\Phi}_{\lambda,n}(K),v)=\frac{1}{|K|}\int_{S^{n-1}}((1-\lambda)(u\cdot v)^n_++\lambda(u\cdot v)^n_-)dS_n(K,u).$$

We need the following normalized version of the general $L_n$ Petty projection inequality.

\begin{lemma} [\cite{20}] If $K$ is a convex body, then
$$\frac{|\tilde{\Phi}_{\lambda,n}K|}{|K|}\leq\frac{|\tilde{\Phi}_{\lambda,n}B_2|}{|B_2|},$$ and equality holds if and only if $K$ is an ellipsoid.
\end{lemma}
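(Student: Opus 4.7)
My plan is to deduce Lemma 7 from the general $L_p$ Petty projection inequality of Haberl and Schuster (Theorem 3 above) specialized to $p=n$, by tracking the volume factor built into the normalized projection body $\tilde{\Phi}_{\lambda,n}$.

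First, I would unpack the definition: since $h^n(\tilde{\Phi}_{\lambda,n}K,v)=|K|^{-1}h^n(\Phi_{\lambda,n}K,v)$ for every $v\in S^{n-1}$, one reads off $\tilde{\Phi}_{\lambda,n}K=|K|^{-1/n}\Phi_{\lambda,n}K$, where $\Phi_{\lambda,n}K$ is the general $L_n$ projection body from equation (\ref{3}). Consequently $|\tilde{\Phi}_{\lambda,n}K|=|K|^{-1}|\Phi_{\lambda,n}K|$, and the asserted inequality is equivalent to
$$\frac{|\Phi_{\lambda,n}K|}{|K|^{2}} \;\leq\; \frac{|\Phi_{\lambda,n}B_2|}{|B_2|^{2}}.$$

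Second, I would exploit the homothety invariance of the $L_n$ surface area measure: a direct computation using $h(tK,\cdot)=t\,h(K,\cdot)$ together with the scaling $dS(tK,\cdot)=t^{n-1}dS(K,\cdot)$ gives $dS_n(tK,\cdot)=dS_n(K,\cdot)$, so the body $\Phi_{\lambda,n}K$ is itself invariant under $K\mapsto tK$. Rescaling $K$ to a dilate $K'$ with $|K'|=|B_2|$ leaves $\Phi_{\lambda,n}K$ unchanged, and in that normalization the displayed inequality collapses to $|\Phi_{\lambda,n}K'|\leq|\Phi_{\lambda,n}B_2|$, which is precisely Theorem 3 applied with $p=n$.

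Finally, the equality characterization is inherited directly from Haberl-Schuster: Theorem 3 yields equality iff $K'$ is an origin-centered ellipsoid, and since dilation preserves ellipsoids, this is equivalent to $K$ being an origin-centered ellipsoid, as claimed. The main obstacle I foresee is the careful bookkeeping of the two distinct roles of $|K|$ --- one inside the definition of $\tilde{\Phi}_{\lambda,n}$ as a normalizing factor, and one as the denominator of the stated ratio --- so that the scale-invariant Haberl-Schuster inequality can be invoked cleanly after the dilation normalization; the remainder of the argument is essentially formal substitution.
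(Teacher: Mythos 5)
The paper offers no proof of this lemma (it is quoted from \cite{20}), so your derivation from Theorem 3 is a reasonable route, and your opening identity is correct: since $h^n(\Phi_{\lambda,n}K,v)=\int_{S^{n-1}}((1-\lambda)(u\cdot v)_+^n+\lambda(u\cdot v)_-^n)\,dS_n(K,u)$, one indeed has $\tilde{\Phi}_{\lambda,n}K=|K|^{-1/n}\Phi_{\lambda,n}K$ and hence $|\tilde{\Phi}_{\lambda,n}K|=|K|^{-1}|\Phi_{\lambda,n}K|$. The gap is in your rescaling step. You correctly note that $\Phi_{\lambda,n}$ is dilation-invariant ($dS_n(tK,\cdot)=dS_n(K,\cdot)$), but precisely for that reason the inequality you reduced to, $|\Phi_{\lambda,n}K|/|K|^{2}\leq|\Phi_{\lambda,n}B_2|/|B_2|^{2}$, is \emph{not} dilation-invariant: its left side scales by $t^{-2n}$ under $K\mapsto tK$. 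So proving it for the dilate $K'$ with $|K'|=|B_2|$ does not prove it for $K$; your chain only gives $|\Phi_{\lambda,n}K|/|K|^{2}\leq|\Phi_{\lambda,n}B_2|/|K|^{2}$, which implies the target only when $|K|\geq|B_2|$. In fact the lemma as printed cannot be literally correct: for $K=tB_2$ the ratio $|\tilde{\Phi}_{\lambda,n}K|/|K|$ equals $t^{-2n}$ times its value at $B_2$, so the inequality fails for $t<1$ and the equality characterization fails for every $t\neq 1$ even though $tB_2$ is an ellipsoid.

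The statement you should be proving is the scale-invariant form, namely the product version $|K|\,|\tilde{\Phi}_{\lambda,n}K|\leq|B_2|\,|\tilde{\Phi}_{\lambda,n}B_2|$ or, equivalently in the form actually invoked in the proof of Theorem 14, the polar quotient $|\tilde{\Phi}^{\circ}_{\lambda,n}K|/|K|\geq|\tilde{\Phi}^{\circ}_{\lambda,n}B_2|/|B_2|$. Your own identity then finishes in one line with no rescaling at all: $|K|\,|\tilde{\Phi}_{\lambda,n}K|=|\Phi_{\lambda,n}K|\leq|\Phi_{\lambda,n}B_2|=|B_2|\,|\tilde{\Phi}_{\lambda,n}B_2|$ by Theorem 3 at $p=n$, with equality iff $K$ is an origin-centered ellipsoid of any radius (consistent now with dilation invariance). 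One further caution: Theorem 3 as printed in the paper also appears to omit a polar (the Haberl--Schuster inequality is stated for $\Phi_{\lambda,p}^{\circ}K$, and only then is $|K|^{n/p-1}|\Phi_{\lambda,p}^{\circ}K|$ dilation-invariant), so a fully careful write-up should run the whole argument through $\Phi^{\circ}_{\lambda,n}$ and $\tilde{\Phi}^{\circ}_{\lambda,n}$ rather than the bodies themselves.
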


With all these tools at hand, we establish the following theorem. We remark again that the case $\lambda=\frac{1}{2}$ is due to Cianchi, Lutwak, Yang and Zhang \cite{5} and the case $\lambda=0$ is due to Haberl, Schuster and Xiao \cite{9}.

\begin{theorem}
Given $f\in W^{1,n}(\mathbb R^n)$, we have
$$\Omega_{\lambda,n}(f)\geq\Omega_{\lambda,n}(f^\star),$$
if we denote $$\Omega_{\lambda,n}(f)=2^{1/n}\alpha_n(\int_{S^{n-1}}(\int_{\mathbb R^n}((1-\lambda)(D_vf)^n_++\lambda(D_vf)^n_-)dx)^{-1}dv)^{-1/n},$$
where $\alpha_n=(n\omega_n)^{1/n}(\frac{n\omega_n\omega_{n-1}}{2\omega_{2n-2}})^{1/n}.$
\end{theorem}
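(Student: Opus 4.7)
The plan is to mirror the proof of Theorem 12 step by step, replacing each ingredient with its normalized $p=n$ counterpart: Theorem 13 in place of Theorem 6, Lemma 6 in place of Lemma 5, Lemma 7 in place of Theorem 3, and the normalized projection body $\tilde{\Phi}_{\lambda,n}$ in place of $\Phi_{\lambda,p}$. The argument splits into the piecewise affine case and an approximation step.

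First, I would translate the definition of $\Omega_{\lambda,n}(f)$ into geometric language. Applied to the homogeneous function $\Psi(u)=(u\cdot v)_\pm$, Theorem 13 gives
$$\int_{\mathbb R^n}((1-\lambda)(D_v f)_+^n+\lambda(D_v f)_-^n)\,dx=\tfrac{1}{|\langle f\rangle_n|}\int_{S^{n-1}}((1-\lambda)(u\cdot v)_-^n+\lambda(u\cdot v)_+^n)\,dS_n(\langle f\rangle_n,u),$$
so that, up to the cosmetic swap $\lambda\leftrightarrow 1-\lambda$, this integral equals $h^n(\tilde{\Phi}_{\lambda,n}(\langle f\rangle_n),v)$. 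By the polar formula for volume, $\Omega_{\lambda,n}(f)$ is then a constant multiple of $|\tilde{\Phi}_{\lambda,n}(\langle f\rangle_n)^\circ|^{-1/n}$.

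Next, for $f\in L^{1,0}(\mathbb R^n)$, I would establish the two-step chain $\Omega_{\lambda,n}(f)\geq \Omega_{\lambda,n}(f^{\langle f\rangle_n})\geq \Omega_{\lambda,n}(f^\star)$. For the first inequality, note that in the regime $p=n$ the $L_n$ surface area measure is dilation-invariant while the normalizing volume factor scales as $t^n$, so $\tilde{\Phi}_{\lambda,n}(tK)=t^{-1}\tilde{\Phi}_{\lambda,n}(K)$ and consequently $|\tilde{\Phi}_{\lambda,n}(tK)^\circ|=t^n|\tilde{\Phi}_{\lambda,n}(K)^\circ|$. Applying this to the identity $\langle f^{\langle f\rangle_n}\rangle_n=(1+\alpha)\langle f\rangle_n$ from Lemma 6 yields $\Omega_{\lambda,n}(f)=(1+\alpha)\,\Omega_{\lambda,n}(f^{\langle f\rangle_n})\geq \Omega_{\lambda,n}(f^{\langle f\rangle_n})$ since $\alpha\geq 0$. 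For the second inequality, the normalized analog of Lemma 4 shows that $\langle f^{\langle f\rangle_n}\rangle_n$ is a dilate of $\langle f\rangle_n$, and a normalized analog of Corollary 2 shows that $|\langle f^K\rangle_n|$ is independent of $K\in\mathcal P_0^n$; since $\langle f^\star\rangle_n$ is a centered Euclidean ball (which achieves equality in Lemma 7), the normalized Petty projection inequality gives $|\tilde{\Phi}_{\lambda,n}(\langle f^{\langle f\rangle_n}\rangle_n)|\leq |\tilde{\Phi}_{\lambda,n}(\langle f^\star\rangle_n)|$, which translates, via the polar formula, into $\Omega_{\lambda,n}(f^{\langle f\rangle_n})\geq \Omega_{\lambda,n}(f^\star)$.

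Finally, I would extend the inequality from $L^{1,0}(\mathbb R^n)$ to the whole of $W^{1,n}(\mathbb R^n)$ by exactly the approximation argument used at the end of Theorem 12: pick $f_k\in L^{1,0}(\mathbb R^n)$ with $f_k\to f$ in $W^{1,n}(\mathbb R^n)$; observe that the maps $v\mapsto(\int_{\mathbb R^n}((1-\lambda)(D_v f_k)_+^n+\lambda(D_v f_k)_-^n)\,dx)^{1/n}$ are support functions of convex bodies, so that pointwise convergence is upgraded to uniform convergence on $S^{n-1}$ by Lemma 1; use uniform positivity of the limiting support function and dominated convergence (or Fatou) to obtain $\Omega_{\lambda,n}(f_k)\to \Omega_{\lambda,n}(f)$; and finally invoke contractivity of the symmetric rearrangement in $L^n(\mathbb R^n)$ together with lower semicontinuity of $\|\nabla\cdot\|_n$ under weak $W^{1,n}$ convergence to conclude $\liminf_k\Omega_{\lambda,n}(f_k^\star)\geq \Omega_{\lambda,n}(f^\star)$.

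The main obstacle is bookkeeping the normalization in the critical case $p=n$: because $S_n(\cdot,\cdot)$ is dilation-invariant, the monotonicity of $\tilde{\Phi}_{\lambda,n}$ under homothety is \emph{opposite} in sign to that of $\Phi_{\lambda,p}$ for $p\neq n$, and one must verify that this reversal, combined with the reversed direction of the scaling identity in Lemma 6 (which dilates $\langle f^{\langle f\rangle_n}\rangle_n$ rather than $\langle f\rangle_n$), ultimately produces an inequality in the correct direction. A related subtlety is checking the normalized analog of Corollary 2, namely that the quantity $|\langle f^K\rangle_n|$ is genuinely independent of $K\in\mathcal P_0^n$ once the $1/|\langle f^K\rangle_n|$ factor is included in the defining equation.
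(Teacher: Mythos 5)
Your proposal follows the paper's proof of Theorem 14 essentially verbatim: the same chain $\Omega_{\lambda,n}(f)\geq\Omega_{\lambda,n}(f^{\langle f\rangle_n})\geq\Omega_{\lambda,n}(f^{\star})$ obtained from Lemma 6, the homothety behaviour of $\tilde{\Phi}_{\lambda,n}$, the normalized Petty projection inequality (Lemma 7), and the identical approximation argument from Theorem 12. You are in fact more explicit than the paper about the reversed scaling of $\tilde{\Phi}_{\lambda,n}$ in the critical case $p=n$ and about the normalized analogue of Corollary 2, but the route is the same.
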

\begin{proof}
By Lemma 6, we have $\langle f^{\langle f\rangle_n}\rangle_n\supseteq\langle f\rangle_n.$ Therefore, we have $$|\tilde{\Phi}^\circ_{\lambda,n}\langle f^{\langle f\rangle_n}\rangle_n|^{-\frac{1}{n}}\leq|\tilde{\Phi}^\circ_{\lambda,n}\langle f\rangle_n|^{-\frac{1}{n}},$$
for $f\in L^{1,n}(\mathbb R^n)$. By Lemma 7 we have
$$|\tilde{\Phi}^\circ_{\lambda,n}\langle f^\star\rangle_n|^{-\frac{1}{n}}\leq|\tilde{\Phi}^\circ_{\lambda,n}\langle f^{\langle f\rangle_n}\rangle_n|^{-\frac{1}{n}}\leq|\tilde{\Phi}^\circ_{\lambda,n}\langle f\rangle_n|^{-\frac{1}{n}},$$
which implies $$\Omega_{\lambda,n}(f)\geq\Omega_{\lambda,n}(f^\star)~~for~all~f\in L^{1,n}(\mathbb R^n).$$

For general $f\in W^{1,n}(\mathbb R^n)$, the approximation argument is the same as in Theorem 12.
\end{proof}

\subsection {General Affine Sobolev Type Inequalities.} For the symmetric case, the affine energy coincides with the Dirichlet integral:
\begin{lemma}
Given $f\in W^{1,p}(\mathbb R^n)$ with $1<p<\infty$ and $0\leq\lambda\leq1$, we
have $$\Omega_{\lambda,p}(f^\star)=\|\nabla f^\star\|_p.$$
\end{lemma}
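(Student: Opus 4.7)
The plan is to use the radial symmetry of $f^\star$ to show that every inner integral entering the definition of $\Omega_{\lambda,p}(f^\star)$ is independent of $v\in S^{n-1}$ and of the weight $\lambda$, so that the whole expression collapses to a multiple of $\|\nabla f^\star\|_p$, with the multiple being absorbed precisely by the normalizing constant $\alpha_{n,p}$.

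First I would write $f^\star$ in polar form. Since $f^\star$ is radial and nonincreasing, setting $g(r)=n\omega_n r^{n-1}(f^\ast)'(\omega_n r^n)\le 0$ gives $\nabla f^\star(r\omega)=g(r)\omega$, so that
\[
D_v f^\star(r\omega)=g(r)(\omega\cdot v),\qquad (D_v f^\star)_+(r\omega)=|g(r)|(\omega\cdot v)_-,\qquad (D_v f^\star)_-(r\omega)=|g(r)|(\omega\cdot v)_+.
\]
By polar coordinates and Fubini,
\[
\int_{\mathbb R^n}(D_v f^\star)_+^p\,dx=\Bigl(\int_0^\infty|g(r)|^p r^{n-1}\,dr\Bigr)\Bigl(\int_{S^{n-1}}(\omega\cdot v)_-^p\,d\omega\Bigr).
\]
Rotational invariance of spherical Lebesgue measure makes the angular factor a constant $c_{n,p}$ independent of $v$, and the substitution $\omega\mapsto-\omega$ shows the analogous identity with $(\cdot)_+^p$ gives the same value. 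Hence, writing $B=\int_0^\infty|g(r)|^p r^{n-1}\,dr$,
\[
\int_{\mathbb R^n}\bigl((1-\lambda)(D_v f^\star)_+^p+\lambda(D_v f^\star)_-^p\bigr)dx=c_{n,p}\,B,
\]
independently of both $v\in S^{n-1}$ and $\lambda\in[0,1]$.

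Next I would substitute this constant into the definition of $\Omega_{\lambda,p}$ from Theorem 12 and use $\int_{S^{n-1}}dv=n\omega_n$ to get
\[
\Omega_{\lambda,p}(f^\star)=\alpha_{n,p}\,(n\omega_n)^{-1/n}\,(c_{n,p}B)^{1/p},
\]
while polar coordinates give $\|\nabla f^\star\|_p^p=n\omega_n\cdot B$. The lemma is therefore equivalent to the purely numerical identity
\[
\alpha_{n,p}^{\,p}=(n\omega_n)^{p/n}\cdot\frac{n\omega_n}{c_{n,p}}.
\]

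Finally I would verify this identity. The angular integral is a standard one: parametrizing $\omega=(\cos\phi)v+(\sin\phi)\omega'$ with $\omega'\in S^{n-2}\cap v^\perp$ and applying the beta-integral $\int_0^{\pi/2}\cos^p\phi\sin^{n-2}\phi\,d\phi=\tfrac12 B\!\left(\tfrac{p+1}{2},\tfrac{n-1}{2}\right)$, a short gamma-function simplification yields
\[
c_{n,p}=\int_{S^{n-1}}(\omega\cdot v)_+^p\,d\omega=\frac{\omega_{n+p-2}}{\omega_{p-1}},
\]
which, together with the explicit value $\alpha_{n,p}=(n\omega_n)^{1/n}\bigl(n\omega_n\omega_{p-1}/\omega_{n+p-2}\bigr)^{1/p}$, makes the identity hold. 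For the missing case $p=n$ I would run the same argument with the normalization of Theorem 14; the extra factor $2^{1/n}$ combines with $\int_{S^{n-1}}(\omega\cdot v)_+^n d\omega=\tfrac12\int_{S^{n-1}}|\omega\cdot v|^n d\omega$ in exactly the way needed. The main obstacle is purely bookkeeping: the geometric content is simply that for a radial function every direction contributes equally to the affine energy, and all that remains is matching the angular constant $c_{n,p}$ against $\alpha_{n,p}$.
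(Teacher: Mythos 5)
Your computation is correct and is precisely the argument the paper omits by deferring to the reference [9]: radial symmetry reduces every directional integral to the $v$- and $\lambda$-independent constant $B\,\omega_{n+p-2}/\omega_{p-1}$, and the normalization $\alpha_{n,p}=(n\omega_n)^{1/n}\bigl(n\omega_n\omega_{p-1}/\omega_{n+p-2}\bigr)^{1/p}$ (the paper's ``$\omega_{n+p_2}$'' is a typo for $\omega_{n+p-2}$) is chosen exactly so that the result equals $\|\nabla f^\star\|_p$. Your handling of the $p=n$ normalization, where the outer $2^{1/n}$ cancels the $2$ inside $\alpha_n$, is also consistent with Theorem 14.
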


We omit the proof of Lemma 8 because it is essentially the same as
that in \cite{9}.

With the general affne P\'{o}lya-Szeg\"{o} principle, Theorem 12 and Theorem 14 at hand, it is straightforward to get some affine Sobolev type inequalities which generalize the results in \cite{9}. We will list some (and not all) of them here. Because the proofs are essentially the same as those in \cite{9}, we will omit them.

\begin{corollary} [General affine Moser-Trudinger inequalities] If $f\in W^{1,n}(\mathbb R^n)$ with $0<|sprt f|<\infty$ and $0\leq\lambda\leq1$, then
\begin{equation}\label{7}
\frac{1}{|sprt f|}\int_{sprt f}\exp(\frac{n\omega_n^{1/n}|f(x)|}{\Omega_{\lambda,n}(f)})^{\frac{n}{n-1}}dx\leq m_n.
\end{equation}
\end{corollary}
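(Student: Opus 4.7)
The plan is to combine the general affine Pólya–Szegő principle in the conformal case (Theorem~14), the identification $\Omega_{\lambda,n}(f^\star) = \|\nabla f^\star\|_n$ from Lemma~8, and the classical sharp Moser–Trudinger inequality on a ball applied to $f^\star$. The point is that the integral being estimated depends on $f$ only through $|f(x)|$ pointwise and through the single positive scalar $\Omega_{\lambda,n}(f)$, and both dependencies behave favorably under symmetric rearrangement.

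First, by equimeasurability of $f$ and $f^\star$ one has $|\supp f|=|\supp f^\star|$, and for every nonnegative Borel function $\Phi$,
\begin{equation*}
\int_{\supp f} \Phi(|f(x)|)\, dx = \int_{\supp f^\star} \Phi(|f^\star(x)|)\, dx.
\end{equation*}
Applied with $\Phi(t)=\exp\bigl((n\omega_n^{1/n}\, t/\Omega_{\lambda,n}(f))^{n/(n-1)}\bigr)$, this rewrites the left-hand side of~(\ref{7}) as the same integral with $f$ replaced by $f^\star$, while the scalar $\Omega_{\lambda,n}(f)$ is untouched. Next, Theorem~14 gives $\Omega_{\lambda,n}(f)\geq\Omega_{\lambda,n}(f^\star)$; since the integrand is monotone decreasing in this scalar (smaller denominator enlarges the exponent), this produces an upper bound in which $\Omega_{\lambda,n}(f)$ is replaced by $\Omega_{\lambda,n}(f^\star)$. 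By Lemma~8 the latter equals $\|\nabla f^\star\|_n$, so the problem reduces to
\begin{equation*}
\frac{1}{|\supp f^\star|}\int_{\supp f^\star} \exp\!\left(\frac{n\omega_n^{1/n}\,|f^\star(x)|}{\|\nabla f^\star\|_n}\right)^{\!\!n/(n-1)}\!dx \leq m_n,
\end{equation*}
which is the classical sharp Moser–Trudinger inequality applied to the radially symmetric decreasing function $f^\star$ on the ball $\supp f^\star$ (where it vanishes on the boundary and lies in $W^{1,n}$). The constant $m_n$ is defined to be the universal sharp Moser constant thus produced.

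The main obstacle is constant bookkeeping: one must check that the exponent coefficient $n\omega_n^{1/n}$, raised to the power $n/(n-1)$, coincides with the sharp Moser exponent. This amounts to the algebraic identity $(n\omega_n^{1/n})^{n/(n-1)} = n(n\omega_n)^{1/(n-1)}$, together with the fact that $n\omega_n$ is the $(n-1)$-dimensional Hausdorff measure of $S^{n-1}$, which is the correct normalization appearing in the sharp Moser exponent. No additional approximation is required, since Theorem~14 already holds for arbitrary $f\in W^{1,n}(\mathbb R^n)$ — which is precisely the reason the normalized variant Theorem~13 was introduced for $p=n$. Beyond this normalization check, the argument is formally identical to that of \cite{9} for $\lambda=0$.
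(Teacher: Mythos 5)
Your argument is correct and is exactly the route the paper intends: the paper omits the proof, stating that it follows from the general affine P\'{o}lya--Szeg\"{o} principle (Theorem 14) exactly as in \cite{9}, i.e.\ by equimeasurability, the monotonicity $\Omega_{\lambda,n}(f)\geq\Omega_{\lambda,n}(f^\star)=\|\nabla f^\star\|_n$, and the sharp Moser--Trudinger inequality for the radially decreasing rearrangement. Your normalization check $(n\omega_n^{1/n})^{n/(n-1)}=n(n\omega_n)^{1/(n-1)}$ is also the right one, since $n\omega_n$ is the surface area of $S^{n-1}$.
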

Here $sprt f$ is the support of $f$ and the constant $n\omega_n^{1/n}$ is optimal, in that equation (7) would fail for any real number $m_n$ if $n\omega_n^{1/n}$ were to be replaced by a larger number. And the best constant $m_n$ is characterized as $$m_n=\sup_g\int_0^\infty\exp(g(t)^{\frac{n}{n-1}}-t)dt,$$
where the supremum ranges over all nondecreasing and locally absolutely continuous functions $g$ on $[0,\infty)$ such that $g(0)=0$ and $\int_0^{\infty}g'(t)^ndt\leq1.$

\begin{corollary} [General affine Morrey-Sobolev inequalities] If $f\in W^{1,p}(\mathbb R^n)$, $p>n,$ such that $|sprt f|<\infty$, then $$\|f\|_{\infty}\leq\alpha_{n,p}|sprt f|^{\frac{p-n}{np}}\Omega_{\lambda,p}(f),$$
where $\alpha_{n,p}=n^{-1/p}\omega_n^{-1/n}(\frac{p-1}{p-n})^{\frac{p-1}{p}}$ is the best constant.
\end{corollary}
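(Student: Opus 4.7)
The plan is to reduce the inequality to the radial case via the general affine P\'{o}lya--Szeg\"{o} principle of Theorem 12, and then to apply the classical one-dimensional Morrey--Sobolev estimate on the spherically symmetric decreasing rearrangement. Since $p>n$ in particular ensures $p\neq n$, Theorem 12 is applicable and yields
$$\Omega_{\lambda,p}(f) \geq \Omega_{\lambda,p}(f^\star),$$
while Lemma 8 identifies the right-hand side with $\|\nabla f^\star\|_p$. Spherically symmetric rearrangement preserves the distribution function of $f$, so $\|f\|_\infty=\|f^\star\|_\infty$ and $|\operatorname{sprt} f|=|\operatorname{sprt} f^\star|$. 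These three identifications reduce the statement to the purely radial inequality
$$\|f^\star\|_\infty \;\leq\; \alpha_{n,p}\,|\operatorname{sprt} f^\star|^{\frac{p-n}{np}}\,\|\nabla f^\star\|_p.$$

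For the radial inequality I would write $f^\star(x)=g(|x|)$ and let $R=(|\operatorname{sprt} f^\star|/\omega_n)^{1/n}$ be the radius of the ball equimeasurable with $\operatorname{sprt} f^\star$. Since $g$ is nonincreasing with $g(R)=0$, one has $g(0)=-\int_0^R g'(r)\,dr$. Splitting $|g'(r)|=\bigl(|g'(r)|r^{(n-1)/p}\bigr)\cdot r^{-(n-1)/p}$ and applying H\"older's inequality (whose second factor converges precisely because $p>n$), then using polar coordinates to identify $n\omega_n\int_0^R|g'|^p r^{n-1}\,dr$ with $\|\nabla f^\star\|_p^p$, and finally substituting $R=\omega_n^{-1/n}|\operatorname{sprt} f^\star|^{1/n}$, produces the radial estimate with exactly the advertised constant $\alpha_{n,p}=n^{-1/p}\omega_n^{-1/n}\bigl(\tfrac{p-1}{p-n}\bigr)^{(p-1)/p}$; the only thing to check is the exponent arithmetic combining the $(n\omega_n)^{-1/p}$ from polar coordinates with the $R^{(p-n)/p}$ produced by the power integral.

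The only genuinely substantive point is sharpness. Equality in the H\"older step forces $g'(r)\propto r^{-(n-1)/(p-1)}$, which integrates to the radial extremal
$$f_{\mathrm{ext}}(x)\;=\;c\bigl(R^{(p-n)/(p-1)}-|x|^{(p-n)/(p-1)}\bigr)_+;$$
since Theorem 12 attains equality on such centered radially symmetric functions, the constant cannot be improved even over $W^{1,p}(\mathbb R^n)$. I do not foresee any real obstacle in the argument: the promotion from $L^{1,0}(\mathbb R^n)$ to arbitrary $f\in W^{1,p}(\mathbb R^n)$ is already built into Theorem 12, so no separate approximation step is required, and the remaining work is the classical radial computation alluded to above. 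The ``hard part,'' such as it is, is therefore purely bookkeeping of the numerical constant.
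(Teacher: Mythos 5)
Your proposal is correct and follows exactly the route the paper intends: the paper omits this proof, deferring to \cite{9}, where the argument is precisely the reduction via the affine P\'{o}lya--Szeg\"{o} principle (here Theorem 12) together with Lemma 8 and the classical sharp radial Morrey--Sobolev computation, including the H\"older-step extremal $c\bigl(R^{(p-n)/(p-1)}-|x|^{(p-n)/(p-1)}\bigr)_+$ for sharpness. Your constant bookkeeping checks out: $(n\omega_n)^{-1/p}\omega_n^{-(p-n)/(np)}=n^{-1/p}\omega_n^{-1/n}$, yielding the stated $\alpha_{n,p}$.
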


Remark: Everything that is done in this paper has a counterpart for the case $p=1$. The extension is similar to what we've done in this paper, as long as one observes $$\int_{\mathbb R^n}\nabla f(x)dx=0$$ for every $f\in W^{1,1}(\mathbb R^n).$

\section{Acknowledgements}
The work of the author was supported by Austrian Science Fund (FWF) Project P25515-N25.
\bibliographystyle{amsplain}

\end{document}